\theoremstyle{plain}
\newtheorem{theorem}{Theorem}[section]
\newtheorem{lemma}[theorem]{Lemma}
\newtheorem{corollary}[theorem]{Corollary}
\newtheorem{sw}[theorem]{The Stong-Whitehead theorem}
\newtheorem{cvl}[theorem]{Whitehead's cut-vertex lemma}
\newtheorem{stong}[theorem]{Stong's cut-vertex lemma}
\theoremstyle{definition}
\newtheorem{algorithm}[theorem]{Algorithm}
\newtheorem{definitions}[theorem]{Definitions}
\newtheorem{notation}[theorem]{Notation}
\newtheorem{history}[theorem]{History}
\newtheorem{content}[theorem]{Content}
\newtheorem{notes}[theorem]{Notes}
\newtheorem{hypotheses}[theorem]{Hypotheses}
\newtheorem{review}[theorem]{Review}
\numberwithin{equation}{theorem}
\DeclareMathOperator{\Aut}{Aut}
\DeclareMathOperator{\rel}{\,rel\,}
\DeclareMathOperator{\supp}{supp}
\DeclareMathOperator{\core}{core\null_\ast\mkern-2mu}
\DeclareMathOperator{\mcore}{modelcore\null_\ast\mkern-2mu}
\DeclareMathOperator{\Wh}{\textsc{W\hskip-1pt h}_\ast\mkern-2mu}
\DeclareMathOperator{\Who}{\textsc{W\hskip-1pt h}}
\DeclareMathOperator{\cuts}{\textsc{cuts}}
\DeclareMathOperator{\Cayley}{Cayley}
\DeclareMathOperator{\BST}{BassSerre}
\DeclareMathOperator{\Cl}{\textsc{Cl}}
\def \bigast {\operatornamewithlimits{\text{\LARGE $\ast$}}}
\def \bigcupp {\operatornamewithlimits{\text{\small $\bigcup$}}}
\def\d1{\discretionary{-}{}{-}}
\def\coloneq{\mathrel{\mathop\mathchar"303A}\mkern-1.2mu=}
\renewcommand{\phi}{\varphi}
\renewcommand{\le}{\leqslant}
\renewcommand{\ge}{\geqslant}
\begin{document}

\pagestyle{myheadings}
\markboth{Sandwiching between free-product factors}
{Warren Dicks}

\title{On free-group  algorithms that sandwich \\a  subgroup  between  free-product factors}

\author{
Warren Dicks\footnote{Partially supported by
Spain's Ministerio de Ciencia e Innovaci\'on
through Project MTM2011-25955.}}

\date{\small\today}

\maketitle

\noindent\textbf{Abstract.} Let $F$ be a finite-rank free group and   $H$ be a finite-rank
subgroup of~$F$.  We discuss proofs of two algorithms that sandwich $H$ between
an upper-layer  free-product factor  of $F$ that contains $H$
and a lower-layer  free-product factor  of $F$ that is contained in $H$.

Richard Stong showed that the unique   smallest-possible  upper layer, denoted
\mbox{$\Cl(H)$},
is visible in the output of
the polynomial-time cut-vertex algorithm of  J.\,H.\,C.\,White\-head.
Stong's proof used  bi-infinite paths in a Cayley tree and  sub-surfaces of
a three-manifold.
We give a variant of his proof that uses edge-cuts of the Cayley tree induced by edge-cuts of  a Bass-Serre tree.

A.\,Clifford and  R.\,Z.\,Gold\-stein gave an  exponential-time algorithm that
determines whether or not the trivial subgroup is the  only possible lower layer.
Their proof used Whitehead's three-manifold techniques. We give a variant of their proof that uses
 Whitehead's cut-vertex results, and thereby obtain a somewhat simpler   algorithm that
yields a lower layer of maximum-possible rank.

\medskip

{\footnotesize
\noindent \emph{2010 Mathematics Subject Classification.} Primary:  20E05;
Secondary: 20E36, 20E08.

\noindent \emph{Key words.}  Sub-bases of free groups.  Free-product factors.
 Cut-vertex algorithm.   Cut-vertex lemma.  Clifford-Gold\-stein algorithm.}

\bigskip

\section{Introduction}\label{sec:1}

\begin{definitions}
For any set $E$, we let
\mbox{$\langle\mkern1mu E\mkern2mu\vert\mkern12mu\rangle$}
denote  the free group on~$E$.
By a \textit{basis}
of \mbox{$\langle\mkern1mu E\mkern2mu\vert\mkern12mu\rangle$}, we mean
a  free-generating set of \mbox{$\langle\mkern1mu E\mkern2mu\vert\mkern12mu\rangle$}.
By a \textit{sub-basis} of \mbox{$\langle\mkern1mu E\mkern2mu\vert\mkern12mu\rangle$}
we mean a subset of a basis
of~\mbox{$\langle\mkern1mu E\mkern2mu\vert\mkern12mu\rangle$}. 
We let \mbox{$\Aut \langle\mkern1mu E\mkern2mu\vert\mkern12mu\rangle$} denote
the group of automorphisms of \mbox{$\langle\mkern1mu E\mkern2mu\vert\mkern12mu\rangle$} 
acting on the right as exponents.

 For any subset $Z$
of~\mbox{$\langle\mkern1mu E\mkern2mu\vert\mkern12mu\rangle$}, we let
 \mbox{$\langle Z \rangle$}  denote  the subgroup of~\mbox{$\langle\mkern1mu E\mkern2mu\vert\mkern12mu\rangle$}
generated by $Z$.  We let    \mbox{$\supp(Z \rel E)$} denote  the $\subseteq$-smallest subset of  $E$ such that
\mbox{$Z  \subseteq \langle \,\supp(Z \rel E)\, \rangle$}.   We let \mbox{$\Cl(Z)$} denote the intersection of all the
 free-product factors (generated by sub-bases)  of \mbox{$\langle\mkern1mu E\mkern2mu\vert\mkern12mu\rangle$}
that contain~$Z$.
\end{definitions}

\begin{hypotheses}\label{hyps:1} Throughout, let  $E$ be a finite set,
 let $Z$  be a finite subset
of~\mbox{$\langle\mkern1mu E\mkern2mu\vert\mkern12mu\rangle$}, and
 let \mbox{$E_Z$} denote  \mbox{$\supp(Z \rel E)$}.
\end{hypotheses}

\begin{history} Recall  Hypotheses~{\normalfont\ref{hyps:1}}.

$\bullet$
In~\cite[publ.\,1936]{W}, J.\,H.\,C.\,Whitehead gave his true-word
and cyclic-word  cut\d1ver\-tex algorithms, and the former determines whether or not
$Z$ is a sub-basis of~\mbox{$\langle\mkern1mu E\mkern2mu\vert\mkern12mu\rangle$}.
A little later, in~\cite[publ.\,1936]{W2}, he gave an
exponential-time,  gen\-eral-purpose algo\-rithm which has  largely overshadowed the
easier-to-prove, polynomial-time, limited-use algorithm.
We wish to emphasize that  the cut\d1ver\-tex  algorithm
suffices to efficiently sandwich  a subgroup between two free\d1product factors.

Whitehead defined a certain finite graph  which we   denote \mbox{$\Wh(Z \rel E_Z)$}.
He  observed that if some vertex of
\mbox{$\Wh(Z \rel E_Z)$} is what we  call a  Whitehead cut-vertex,
then it is straightforward to construct an automorphism  of
\mbox{$\langle\mkern1mu E\mkern2mu\vert\mkern12mu\rangle$}
 that  strictly  reduces  the  total  $E$-length of~$Z$.
Clearly, one then has an algorithm (with choices) which constructs some  
\mbox{$\Psi \in \Aut \langle\mkern1mu E\mkern2mu\vert\mkern12mu\rangle$} such that
 \mbox{$\Wh(Z^{\mkern2mu\Psi} \rel E_{Z^{\Psi}})$} has no Whitehead cut-vertices.
It then remains to extract information from  \mbox{$\Psi$} and \mbox{$Z^{\mkern2mu\Psi}$}.
For example, it will transpire that the rank of  \mbox{$\Cl(Z)$} is
\mbox{$\vert  E_{Z^{\Psi}} \vert$}.  One reason this is interesting
is that Edward\,\,C.\,Turner~\cite[Theorem~1]{Turner}  showed that 
 the rank of \mbox{$\Cl(Z)$} is  \mbox{$\vert E \vert$} if and only if
$Z$ is a \textit{test set} for injective endomorphisms  of~\mbox{$\langle\mkern1mu E\mkern2mu\vert\mkern12mu\rangle$}
to be  automorphisms, that is, each injective endomorphism 
of~\mbox{$\langle\mkern1mu E\mkern2mu\vert\mkern12mu\rangle$}
that maps \mbox{$\langle Z \rangle$} onto 
itself is an automorphism.  

Using a three-manifold model of \mbox{$\Wh(Z \rel E_Z)$},   Whitehead  proved  a
cut\d1ver\-tex lemma:
 If \mbox{$Z$} is a sub-basis of~\mbox{$\langle\mkern1mu E\mkern2mu\vert\mkern12mu\rangle$},
then \mbox{$Z^{\mkern2mu\Psi} \subseteq  E^{\pm 1}$}.

  Hence,
  \mbox{$Z$} is a sub-basis of~\mbox{$\langle\mkern1mu E\mkern2mu\vert\mkern12mu\rangle$}
if and only if \mbox{$Z {\,\cap\,} Z^{-1} \hskip-.5pt = \hskip-.5pt \emptyset$}
and  \mbox{$Z^{\mkern2mu\Psi} \subseteq  E^{\pm 1}$};
in this event,
 \mbox{$Z \cup (E^{\mkern1mu\Psi^{-1}} \mkern-6mu{-}  Z^{\pm 1})$} is a
basis of~\mbox{$\langle\mkern1mu E\mkern2mu\vert\mkern12mu\rangle$}.

Set  \mbox{$E\mkern1mu' \coloneq E^{\mkern1mu\Psi^{-1}}$} \hskip-4pt
and \mbox{$E\mkern1mu'\mkern-6mu_Z \coloneq \supp(Z \rel E\mkern1mu')$}.
Expressing the elements  of \mbox{$Z^{\mkern2mu\Psi}$}
in terms of \mbox{$E$}  is equivalent to expressing the elements of $Z$  in terms of  \mbox{$E\mkern1mu'$}.
The important point is  that
\mbox{$\Wh(Z \rel E\mkern1mu'\mkern-6mu_Z)$} is
isomorphic to \mbox{$\Wh(Z^{\mkern2mu\Psi} \rel E_{Z^{\Psi}})$}  and, hence,
has no Whitehead cut-vertices.

In \cite[publ.\,1997]{Stong}, Richard Stong used bi-infinite paths in a Cayley tree and
sub-surfaces  homologous to an essential  disk in  a three-manifold to  prove a more general 
  cut-vertex lemma: The set \mbox{$E\mkern1mu'\mkern-6mu_Z$} is a basis of \mbox{$\Cl(Z)$},
and, for each free-product   factorization \vspace{-2.5mm} \mbox{$\Cl(Z) = \bigast\limits_{i\in I} H_i$}  such   that
\mbox{$Z \subseteq \bigcupp\limits_{i\in I} H_i$}, the set \mbox{$E\mkern1mu'\mkern-6mu_Z$} 
 contains a basis of each~\mbox{$H_i$}. 

Not only can a basis of \mbox{$\Cl(Z)$}  be computed  efficiently, but also there are only 
finitely many possibilities for the sets
 \mbox{$\{H_i\}_{i \in I}$}, and they can all be computed efficiently.
To see how Stong's cut-vertex lemma generalizes Whitehead's, notice that
if \mbox{$Z$} is a sub-basis   of
\mbox{$\langle\mkern1mu E\mkern2mu\vert\mkern12mu\rangle$},  then
\mbox{$ \Cl(Z) = \langle Z \rangle = \bigast\limits_{z \in Z} \langle z \rangle$}
and \mbox{$Z \subseteq \bigcupp\limits_{z\in Z} \langle z \rangle$},  and, here,
for \mbox{$E\mkern1mu'\mkern-6mu_Z$}~to contain  a basis
of each~\mbox{$\langle z \rangle$},  which is necessarily \mbox{$\{z\}$} or  \mbox{$\{z^{-1}\}$},
one must   have \mbox{$(E\mkern1mu'\mkern-6mu_Z)^{\pm 1} \supseteq Z$}, and, hence,
\mbox{$ E^{\pm 1} \supseteq Z^{\mkern2mu\Psi}$}.

\smallskip

$\bullet$ In \cite[publ.\,2010]{CG0},  A.\,Clifford and  R.\,Z.\,Gold\-stein  revisited
Whitehead's three-manifold techniques
and constructed  an ingenious  exponential-time   algorithm  which determines whether or not
 some element of  \mbox{$\langle Z\rangle$}  lies in a
 basis of  \mbox{$\langle\mkern1mu E\mkern2mu\vert\mkern12mu\rangle$},
and, in the affirmative case,
finds such an element.
\end{history}

\begin{content}  What we do in this article is formalize Whitehead's cut-vertex algorithm,
  give a   Bass\d1Serre\d1the\-o\-retic proof of Stong's   cut-ver\-tex lemma,
and  give an algorithm  that yields
  a  basis \mbox{$E''$} of  \mbox{$\langle\mkern1mu E\mkern2mu\vert\mkern12mu\rangle$}
that maximizes~\mbox{$\vert   E'' \cap \langle Z \rangle \,\vert$}.

  In Section\,\ref{sec:autos}, for completeness and to develop the notation and basic results
that will be used, we
 formalize part of Whitehead's discussion of
cut-vertices and  free-group automorphisms, including  his true-word cut-vertex algorithm.

In Section\,\ref{sec:White},    Stong's   beautiful  true-word cut-vertex  lemma   is proved
using edge-cuts of a Cayley tree induced by edge-cuts of a Bass-Serre tree.  At this stage,
 we will have  given a detailed proof
for the polynomial\d1time algorithm for computing a basis of $\Cl(Z)$
that is  more algebraic than  Stong's proof.

In Section\,\ref{sec:CG}, we  restructure the Clifford\d1Goldstein argument using Whitehead's cut-vertex results
in place of the topology,
 and obtain a  slightly faster, more powerful   algorithm that yields
  a  basis \mbox{$E''$} of  \mbox{$\langle\mkern1mu E\mkern2mu\vert\mkern12mu\rangle$}
 which maxi\-mizes \mbox{$\vert   E'' \cap \langle Z \rangle \,\vert$}. In particular,
 \mbox{$  E'' \cap \langle Z \rangle  \ne \emptyset$} if and only if  some element
of  \mbox{$\langle Z\rangle$}  lies in a
 basis of  \mbox{$\langle\mkern1mu E\mkern2mu\vert\mkern12mu\rangle$}.
\end{content}

\section{A formalized cut-vertex algorithm}\label{sec:autos}

This technical section  gives elementary   definitions and arguments
that formalize  part  of
Whitehead's discussion \cite[pp.50--52]{W} of  cut-vertices  and
free\d1group automorphisms.

By a   \textit{graph},
we mean  a set given as the disjoint union of two sets,
called the \textit{vertex-set} and the \textit{edge-set}, together with an
\textit{initial-vertex map} and a \textit{terminal-vertex map}, each of which maps the edge-set to the vertex-set.
 For any set $S$, we write  \mbox{$\mathbb{K}(S)$} to denote the   graph
 which has vertex-set~\hskip-.2pt$S$  and  edge-set \mbox{$S^{\times 2} \coloneq S\,{\times }S$},
where an   edge   \mbox{$(x,y)$} has initial vertex $x$ and terminal vertex~$y$.

\begin{notation}  \label{not:Ds} Recall   Hypotheses~{\normalfont\ref{hyps:1}}.

  $\bullet$  For \mbox{$e \in E$}, we write   \mbox{$\overline e \coloneq e^{-1}$} and
 \mbox{$e^{\pm 1} \coloneq \{e, \overline e\}$}. We write
\mbox{$E^{-1} \coloneq \{ \overline e \mid e \in E\}$}  and
\mbox{$E^{\pm 1} \coloneq E \cup E^{-1}$}. We shall be interested in the   graph 
\mbox{$\mathbb{K}(E^{\pm 1} \cup \{1\})$}, which has basepoint~\mbox{$1$} and 
an inversion map on the vertices.

\medskip

  $\bullet$   Consider any \mbox{$z \in \langle\mkern1mu E\mkern2mu\vert\mkern12mu\rangle$}, and let
   \mbox{$e_1 e_2 \cdots e_n$} represent   the reduced
\mbox{$E^{\pm 1}$}-ex\-pres\-sion for~\mbox{$z$}.  \vspace{-1.5mm}

  $\scriptstyle^{\bullet}$    
 \mbox{$\supp(\{z\} \rel E) = \bigcupp\limits_{i=1}^n  (E \cap e_i^{\pm 1})$} and
  \mbox{$\supp(Z \rel E) = \bigcupp\limits_{z\in Z} \supp(\{z\} \rel E)$}.

  $\scriptstyle^{\bullet}$  We set
 \mbox{$\vert\vert  z \vert \vert_{E} \coloneq n$} and
\mbox{$\vert\vert  Z\vert \vert_{E} \coloneq  \sum\limits_{z\in Z} \vert\vert  z \vert \vert_{E}\mkern2mu$}.

 $\scriptstyle^{\bullet}$  We say that a product \mbox{$xy$} has  \textit{no  \mbox{$E^{\pm 1}$}-cancellation}
 if \mbox{$\vert\vert xy\vert\vert_E =
\vert\vert x\vert\vert_E \hskip2.4pt {+} \hskip2.4pt \vert\vert y\vert\vert_E$}, and then
sometimes write \mbox{$xy$} as \mbox{$x{\cdot}y$} for emphasis.

  $\scriptstyle^{\bullet}$  Suppose that \mbox{$z \ne 1$}.  We set   \mbox{$e_{0} \coloneq  e_{n+1} \coloneq 1$}.  
For  \mbox{$i  \in \{ 0,1,\ldots, n\}$},  we   say that \textit{\mbox{$(e_i, e_{i+1})$} occurs in the
 reduced \mbox{$(E^{\pm 1} \cup \{1\})$}-ex\-pression for $z$}, and note that
  there exist  \mbox{$g', g'' \in   \langle\mkern1mu E\mkern2mu\vert\mkern12mu\rangle$}
such that
\mbox{$ z  =  g\,' {\cdot}  e_i  {\cdot} e_{i+1}  {\cdot} g''$}
with no \mbox{$E^{\pm 1}$}-cancellation,  \mbox{$g'=1$} if \mbox{$e_i = 1$},
and   \mbox{$g''= 1$} if \mbox{$e_{i+1}=1$}.  We set

\smallskip

\centerline{
 \mbox{$\Wh(\{z\}\rel E) \coloneq
E^{\pm1}   \cup   \{1\}    \cup   \{\, (\overline e_i, e_{i+1}) \,\}_{i=0}^n \,\,\subseteq \,\,
\mathbb{K}(E^{\pm 1} \cup \{1\})$}}

\smallskip

\noindent  and \mbox{$\Wh(\{1\}\rel E) \coloneq   E^{\pm1}   \cup   \{1\}$}.
 For example, for each \mbox{$e \in E^{\pm 1}$}, we have
\mbox{$\Wh(\{e\}  \rel E) = E^{\pm1}\cup \{1\}  \cup
  \{   (1, e),  (\overline e, 1) \}$}.  We  also have the pentagonal example
 \mbox{$\Wh(\{x^2y^2\} \rel   \{x,y\})  =  \{ x, y, \overline x,
 \overline y, 1,
(1, x),  (\overline x, x),
 (\overline x,y),   ( \overline y, y),
 (\overline y,1) \}.$}

 \medskip

 $\bullet$  
Let $S$ be a subset of  \mbox{$\langle\mkern1mu E\mkern2mu\vert\mkern12mu\rangle$}.
 If \mbox{$S\ne \emptyset$}, we set 

\centerline{
\mbox{$\Wh(S  \rel E) \coloneq \bigcupp\limits_{z\in S} \Wh(\{z\}\text{ rel }E) \,\,\subseteq \,\,
\mathbb{K}(E^{\pm 1} \cup \{1\})$},}

\noindent and we set
 \mbox{$\Wh( \emptyset \rel E) \coloneq  E^{\pm1}   \cup   \{1\}$}.
 In \mbox{$\Wh(S\text{ rel }E)$}, a vertex \mbox{$e_\star$}  
is said to be a \textit{Whitehead cut-vertex}  if  removing \mbox{$e_\star$} and  all  the edges 
incident to~\mbox{$e_\star$}
leaves a basepointed  graph that is not connected; this entails  \mbox{$e_\star \ne 1$}.
If \mbox{$\Wh(S\text{ rel }E)$} is not connected, then each element of \mbox{$E^{\pm 1}$} is a
Whitehead cut-vertex, since the set of valence-zero vertices is closed under inversion.

 \medskip

 $\bullet$  We let  \mbox{$\cuts(E)$} denote the
set of those ordered triples  \mbox{$(\,\null_0D,\null_1 D, e_\star\,)$} such that
 \mbox{$\null_0D  \cup \null_1D  = E^{\pm 1}$}, \, \mbox{$\null_0D  \cap \null_1D   = \{e_\star\}$},
and \mbox{$\null_1D  \ne \{e_\star \}$}.  Clearly, 
\mbox{$\null_0 D  \subseteq E^{\pm 1}$}, \mbox{$\null_1 D  \subseteq E^{\pm 1}$}, and
  \mbox{$e_\star\, \in E^{\pm 1}$}. Suppose that \mbox{$\mathbf{C} = (\,\null_0D,\null_1D, e_\star\,)  \in \cuts(E)$}.

  $\scriptstyle^{\bullet}$  For each \mbox{$(\alpha,\beta) \in \{0,1\}^{\times 2}$},
we   set
\mbox{$\null_\alpha D_\beta \coloneq \null_\alpha D  \cap \null_\beta D ^{-1}$} and
 \mbox{$\null_\alpha E_\beta \coloneq  E \cap\, \null_\alpha D_\beta$}.

 $\scriptstyle^{\bullet}$   
 Let  \mbox{$\chi \colon E^{\pm 1} \to \{0,1\}$},   \mbox{$e \mapsto   \chi(e) \coloneq \vert \{e\} \cap \null_1D\vert$},
be the characteristic map of~\mbox{$\null_1D$}.
We set \mbox{$\eta_{\mathbf{C}} \coloneq \chi(\overline e_\star\,)\in \{0,1\}$}
 and  \mbox{$d_\star\, \coloneq e_\star^{2\eta_{\mathbf{C}}{-}1}  \in\, e_\star^{\pm 1}$}, that is,
 \mbox{$d_\star = \overline e_\star$} if \mbox{$\overline e_\star \in \null_0D$},
while \mbox{$d_\star = e_\star$} if \mbox{$\overline e_\star \in \null_1D$}. 
We define \mbox{$\phi_{\mathbf{C}}$} to be the automorphism   of  
\mbox{$\langle\mkern1mu E\mkern2mu\vert\mkern12mu\rangle$}
that fixes \mbox{$d_\star$}  and  maps $e$ to
\mbox{$d_\star^{\mkern3mu\chi(e)} \,e \,  \overline d_\star \null^{\mkern2mu\chi(\overline e)} $}
for each \mbox{$e \in E {-}d_\star^{\pm 1}$}.

   $\scriptstyle^{\bullet}$    We define   three subgraphs of
 \mbox{$\mathbb{K}(E^{\pm 1} \cup \{1\})$}:
\mbox{$\null_0\mkern-5mu\Who(\mathbf{C}) \coloneq   \mathbb{K}(\null_0D \cup \{1\})$};
\mbox{$\null_1\mkern-5mu\Who(\mathbf{C}) \coloneq   \mathbb{K}(\null_1D)$}; and,
\mbox{$\Wh(\mathbf{C}) \coloneq  \mathbb{K}(\null_0D \cup \{1\}) \cup  \mathbb{K}(\null_1D)$}.
We say that  \mbox{$\mathbf{C}$} \textit{cuts}  each subgraph  of \mbox{$\Wh(\,\mathbf{C})$} 
 with the full   vertex-set, \mbox{$E^{\pm1}   \cup   \{1\}$}. 

If  \mbox{$\mathbf{C}$} cuts   \mbox{$\Wh(Z\text{ rel }E)$},
then     \mbox{$e_\star$} is  a Whitehead cut-vertex of   \mbox{$\Wh(Z\text{ rel }E)$},
since   \mbox{$ \null_0D \cup \{1\}$} and
 \mbox{$\null_1D$} have union
 \mbox{$E^{\pm1}   \cup   \{1\}$} and intersection
 \mbox{$\{e_\star \}$},  while
 \mbox{$\null_0D \cup \{1\}  \ne \  \{e_\star \}  \ne  \null_1D$}.
\end{notation}

\begin{lemma}\label{lem:W}  With Hypotheses~{\normalfont\ref{hyps:1}}, fix
\mbox{$\mathbf{C} = (\null_0D, \null_1D, e_\star\,) \in \cuts(E)$}, and let
\mbox{$z \in \langle\mkern1mu E\mkern2mu\vert\mkern12mu\rangle$}.
Then the following hold.  \vspace{-2mm}
 \begin{enumerate}[{\rm (i)}]
\setlength\itemsep{-2pt}
\item  \mbox{$\tilde{E} \coloneq \hskip -15pt \bigcupp\limits_{(\alpha,\beta)\in\{0,1\}^{\times 2}}
\hskip -10pt ( a^{\,\alpha}(\,\null_\alpha E_\beta)\, \overline a^\beta)$} is a basis of
  \mbox{$\langle\mkern1mu E\cup\{a\}\mkern2mu\vert\mkern12mu\rangle$}. \vspace{-.5mm}
\item   \mbox{$\vert \vert z \vert \vert_{\widetilde{E}}  =
\vert \vert z \vert \vert_{ E\phantom{\widetilde{E}}}\mkern-8mu$}  if and only if \mbox{$\mathbf{C}$}
cuts \mbox{$\Wh(\{z\} \rel E)$}.
\item If  \mbox{$\mathbf{C}$}
cuts \mbox{$\Wh(\{z\} \rel E)$}, then
\mbox{$\vert \vert z^{\mkern2mu\overline \phi_{\mathbf{C}}} \vert \vert_{E} \le   \vert \vert z \vert \vert_{E}$},
\item If  \mbox{$\mathbf{C}$}
cuts \mbox{$\Wh(\{z\} \rel E)$} and \mbox{$e_\star$} has positive valence in the subgraph
\mbox{$\Wh(\{z\} \rel E) \cap \null_{1-\eta_{\mathbf{C}}}\mkern-7mu\Who(\mathbf{C})$}, then
\mbox{$\vert \vert z^{\mkern2mu\overline \phi_{\mathbf{C}}} \vert \vert_{E} <  \vert \vert z \vert \vert_{E}$}.
\end{enumerate}
\end{lemma}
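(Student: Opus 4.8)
The plan for (i) is to verify that $\widetilde E$ generates $\langle E\cup\{a\}\mid\rangle$ and has exactly $\vert E\vert+1$ elements; since any generating set of a finite-rank free group whose cardinality equals the rank is a basis, this gives (i). For the count, note that $\null_0 D\cap\null_1 D=\{e_\star\}$ while $\overline e_\star$ lies in only one of $\null_0 D,\null_1 D$; hence the positive letter of the pair $e_\star^{\pm1}$ lies in exactly two of the four sets $\null_\alpha E_\beta$, and every other letter of $E$ in exactly one, so $\vert\widetilde E\vert=\vert E\vert+1$. The two resulting copies of that letter in $\widetilde E$ differ by a factor $a^{\pm1}$, so dividing one by the other recovers $a$; once $a$ is available, each $e\in E$ is recovered from $a^{\chi(e)}e\,\overline a^{\chi(\overline e)}\in\widetilde E$, so $\widetilde E$ generates.

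For (ii)--(iv), write $e_1\cdots e_n$ for the reduced expression of $z$, put $e_0=e_{n+1}:=1$ and $\chi(1):=0$, and set $c_i:=\chi(\overline e_i)-\chi(e_{i+1})$ for $0\le i\le n$. The computations rest on one substitution. Let $\theta\in\Aut\langle E\cup\{a\}\mid\rangle$ carry $E\cup\{a\}$ to the basis $\widetilde E$ of (i), and write $\overline\theta:=\theta^{-1}$, so that $\vert\vert z\vert\vert_{\widetilde E}=\vert\vert z^{\overline\theta}\vert\vert_{E\cup\{a\}}$. A direct check gives $g^{\overline\theta}=\overline t^{\,\chi(g)}g\,t^{\chi(\overline g)}$ for every $g\in E^{\pm1}$, where $t:=e_\star\overline a$, and hence
\[ z^{\overline\theta}=t^{c_0}e_1 t^{c_1}e_2\cdots e_n t^{c_n}. \]
The companion formula for $\overline\phi_{\mathbf{C}}$ is the same with $t$ replaced by $d_\star$, with one crucial difference: since $e^{\overline\phi_{\mathbf{C}}}=\overline d_\star^{\,\chi(e)}e\,d_\star^{\chi(\overline e)}$ only for $e\in E^{\pm1}-e_\star^{\pm1}$, while $\overline\phi_{\mathbf{C}}$ fixes $d_\star$, the powers $d_\star^{\pm1}$ adjacent to occurrences of $e_\star^{\pm1}$ in $z$ are suppressed. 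Throughout I use the dictionary: $c_i=0$ forces the edge $(\overline e_i,e_{i+1})$ to lie inside $\null_0 D\cup\{1\}$ or inside $\null_1 D$, and for $c_i\ne0$ that edge is cut by $\mathbf{C}$ exactly when $e_i=\overline e_\star$ (if $c_i=1$) or $e_{i+1}=e_\star$ (if $c_i=-1$); thus $\mathbf{C}$ cuts $\Wh(\{z\}\rel E)$ iff every corner with $c_i\ne0$ is of this special form.

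Part (ii) is then immediate from the fact that $a$ is a new generator: a short check on consecutive corners shows the letters $a^{\pm1}$ contributed by the $t^{c_i}$ are always separated by nontrivial $E$-syllables, so they never cancel. Counting surviving letters gives $\vert\vert z\vert\vert_{\widetilde E}=\vert\vert z\vert\vert_E+2\cdot\#\{i:\text{corner }i\text{ is not cut by }\mathbf{C}\}$: a $c_i=0$ corner contributes nothing, a cut corner with $c_i\ne0$ has its accompanying $e_\star^{\pm1}$ cancel one original letter (net change $0$), and an uncut corner leaves both its $a^{\pm1}$ and its $e_\star^{\pm1}$ uncancelled, adding $2$. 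Hence $\vert\vert z\vert\vert_{\widetilde E}=\vert\vert z\vert\vert_E$ iff no corner is uncut, i.e.\ iff $\mathbf{C}$ cuts $\Wh(\{z\}\rel E)$, proving (ii).

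For (iii) and (iv) assume $\mathbf{C}$ cuts $\Wh(\{z\}\rel E)$ and reconsider the insertions with $d_\star\in E^{\pm1}$ in place of $t$. Now the inserted powers can cancel, and the cut hypothesis is exactly what forces a cancellation at every corner carrying an insertion: a special corner cancels its $d_\star^{\pm1}$ against an adjacent $e_\star^{\pm1}$, and an insertion next to a fixed (suppressed) occurrence of $e_\star^{\pm1}$ cancels that fixed letter. Hence no corner lengthens $z^{\overline\phi_{\mathbf{C}}}$ and $\vert\vert z^{\overline\phi_{\mathbf{C}}}\vert\vert_E\le\vert\vert z\vert\vert_E$, which is (iii); for (iv), positive valence of $e_\star$ in $\Wh(\{z\}\rel E)\cap\null_{1-\eta_{\mathbf{C}}}\Who(\mathbf{C})$ singles out a corner incident to $e_\star$ whose cancellation removes an original letter with no compensating gain, forcing a strict drop. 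I expect this bookkeeping to be the main obstacle: unlike in (ii), where $a$ never cancels, $d_\star\in E^{\pm1}$ does, and it interacts delicately with the occurrences of $e_\star^{\pm1}$ in $z$ that $\overline\phi_{\mathbf{C}}$ fixes. One must check corner-by-corner that these cancellations neither overlap nor are double-counted, and that the deficiency $\vert\vert z\vert\vert_E-\vert\vert z^{\overline\phi_{\mathbf{C}}}\vert\vert_E$ is measured exactly by the $e_\star$-valence in the correct one of $\null_0\Who(\mathbf{C}),\null_1\Who(\mathbf{C})$; the choice $\null_{1-\eta_{\mathbf{C}}}\Who(\mathbf{C})$ reflects whether $\overline e_\star$ lies in $\null_0 D$ or $\null_1 D$, and the argument splits according to $\eta_{\mathbf{C}}=0$ or $1$ (i.e.\ $d_\star=\overline e_\star$ or $d_\star=e_\star$), in which the suppressed-letter cancellations behave differently.
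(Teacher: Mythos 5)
Your parts (i) and (ii) are correct and essentially the paper's own argument: for (i) the paper likewise notes that the positive letter of $e_\star^{\pm1}$ lies in exactly two of the sets $\null_\alpha E_\beta$ and that the quotient of the two resulting elements of $\tilde E$ recovers $a$; for (ii) your corner dictionary is the paper's equivalence between the reduced $\tilde E^{\pm1}$-expression being $(a^{\chi_0}e_1\overline a^{\mkern2mu\chi_1})(a^{\chi_1}e_2\overline a^{\mkern2mu\chi_2})\cdots(a^{\chi_{n-1}}e_n\overline a^{\mkern2mu\chi_n})$ and the condition that each corner $(\overline e_i,e_{i+1})$ lies in $\null_{\chi_i}D^{\times2}$ with $\chi_0=\chi_n=0$.

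The gap is in (iii) and (iv). You attempt a direct cancellation analysis of $z^{\overline\phi_{\mathbf{C}}}=e_1^{\overline\phi_{\mathbf{C}}}\cdots e_n^{\overline\phi_{\mathbf{C}}}$ inside $\langle\mkern1mu E\mkern2mu\vert\mkern12mu\rangle$, and you explicitly defer the step that carries all the content: the corner-by-corner verification that the insertions of $d_\star^{\pm1}$ cancel where you say they do, that these cancellations neither overlap nor are double-counted, and that no new reducible pairs propagate. That verification \emph{is} statements (iii) and (iv); as written nothing excludes, for instance, two adjacent insertions competing for the same suppressed occurrence of $e_\star^{\pm1}$. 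The paper avoids every such issue with one observation: let $\tilde\phi\colon\langle\mkern1mu E\cup\{a\}\mkern2mu\vert\mkern12mu\rangle\to\langle\mkern1mu E\mkern2mu\vert\mkern12mu\rangle$ be the retraction fixing $E$ and sending $a\mapsto d_\star$. Then $\tilde\phi$ carries $\tilde E$ into $E^{\phi_{\mathbf{C}}}\cup\{1\}$ (the two copies of the $e_\star$-letter go to $e_\star$ and to $1$, all other elements of $\tilde E$ go to the corresponding $e^{\phi_{\mathbf{C}}}$) and fixes $z$; so applying $\tilde\phi$ letterwise to a reduced $\tilde E^{\pm1}$-expression for $z$ yields an $(E^{\phi_{\mathbf{C}}})^{\pm1}$-expression for $z$ of length at most $\vert\vert z\vert\vert_{\tilde E}$, whence $\vert\vert z^{\overline\phi_{\mathbf{C}}}\vert\vert_E=\vert\vert z\vert\vert_{E^{\phi_{\mathbf{C}}}}\le\vert\vert z\vert\vert_{\tilde E}=\vert\vert z\vert\vert_E$ by (ii), with no cancellation bookkeeping at all. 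For (iv), the valence hypothesis forces the reduced $\tilde E^{\pm1}$-expression for $z$ to contain an occurrence of $a^{1-\eta_{\mathbf{C}}}e_\star\overline a^{\mkern2mu\eta_{\mathbf{C}}}$ or its inverse, which is precisely the letter that $\tilde\phi$ kills, so the image expression has length strictly less than $n$ and the inequality is strict. You should either adopt this retraction argument or actually carry out the postponed verification; until then (iii) and (iv) are unproved.
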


\begin{proof} Set \mbox{$F \coloneq \langle\mkern1mu E\mkern2mu\vert\mkern12mu\rangle$},
 \mbox{$\tilde F \coloneq \langle\mkern1mu E\cup\{a\}\mkern2mu\vert\mkern12mu\rangle$},
\mbox{$\eta \coloneq \eta_{\mathbf{C}}$},  and \mbox{$\phi \coloneq \phi_{\mathbf{C}}$}.

 \hskip10pt(i).  Recall that  \mbox{$e_\star  \in E^{\pm 1}$}.

  If \mbox{$e_\star\in E$}, then  \mbox{$\null_0 E_\eta \cap \null_1E_\eta  = \{e_\star \}$} \vspace{.5mm} and there
are no other overlaps among the~\mbox{$\null_\alpha E_\beta$}.
Since
\mbox{$\{ a^0 e_\star \overline a^{\mkern4mu\eta}, a^1 e_\star \overline a^{\mkern4mu\eta}\}\subseteq \tilde E$} and
\mbox{$(a^1 e_\star \overline a^{\mkern4mu\eta})(a^0 e_\star \overline a^{\mkern4mu\eta})^{-1} = a$},
we see easily that \mbox{$\tilde E$}  is a basis of~\mbox{$\tilde F$}.

Similarly, if \mbox{$e_\star  \in E^{-1}$}, then
\mbox{$\null_\eta E_0 \cap \null_\eta E_1 = \{\overline e_\star \}$}\vspace{1mm} and there
are no other overlaps among the~\mbox{$\null_\alpha E_\beta$}.  Again,
 \mbox{$\tilde E$}  is a basis of~\mbox{$\tilde F$}.

(ii).  Let  \mbox{$e_1 e_2 \cdots e_n$} represent the reduced \mbox{$E^{\pm 1}$}-expression for  $z$.
For any   map  \mbox{$  \{0,\ldots, n\} \to \{0,1\}$},  \mbox{$i \mapsto \chi_i$},
the following three conditions are easily seen to be equivalent.

\hskip 10pt $\bullet$   the reduced
\mbox{$\tilde E^{\pm 1}$}-expression for  $z$  is
  \mbox{$(a^{\mkern1mu\chi_0} e_1 \overline  a^{\mkern3mu\chi_1}) ( a^{\mkern1mu\chi_1} e_2
\overline  a^{\mkern3mu\chi_2})
\cdots  (a^{ \chi_{n-1}} e_n \overline a^{\mkern3mu\chi_n})$}.

\hskip 10pt $\bullet$   
\mbox{$e_i \in \null_{\chi_{i-1}} D_{\chi_i}$},  \mbox{$i = 1,2,\ldots, n$}, and  \mbox{$\chi_0 = \chi_{n} = 0$}.

\hskip 10pt $\bullet$  
 \mbox{$(\overline e_i,  e_{i+1}) \in  \null_{\chi_i}D^{\times 2}$},    \mbox{$i = 1,2,\ldots, n{-}1$}, 
 \mbox{$(\overline e_n,  e_{1}) \in  \null_{0}D^{\times 2}$},  and \mbox{$\chi_0 = \chi_{n} = 0$}.

\noindent Now (ii) follows.   \vspace{.5mm}

(iii). Let  \mbox{$\tilde \phi \colon \tilde F \to F$} denote the retraction that
carries  $a$~to \mbox{$d_\star\, \coloneq \,\,e_\star^{2\eta-1}$}. We apply  \mbox{$\tilde \phi$}
to \mbox{$\{a^\eta e_\star\, \overline  a^{\mkern3mu \eta}, a^{1-\eta} e_\star\, \overline  a^{\mkern3mu\eta}\}
= \{a^0 e_\star\, \overline a^{\mkern3mu \eta} , a^1 e_\star\, \overline  a^{\mkern3mu \eta}\} \subseteq \tilde E^{\pm 1}.$}
Here, we have
\mbox{$(a^\eta e_\star\, \overline  a^{\mkern3mu \eta})^{\tilde \phi} =
e_\star\, d_\star^{\mkern3mu \eta-\eta} = e_\star\,$} and
\mbox{$(a^{1-\eta} e_\star\, \overline  a^{\mkern3mu\eta})^{\tilde \phi}  =
e_\star\, d_\star^{1-\eta  -\eta} =  1$}.
It follows that \mbox{$\tilde \phi$} carries \mbox{$\tilde E$} to \mbox{$E^{\mkern1mu\phi} \cup \{1\}$}.
Since \mbox{$z^{\tilde \phi} = z$}, we see that
\mbox{$\vert\vert z \vert\vert_{E^{\phi}\phantom{\widetilde{E}\mkern-12mu}}
\le \vert\vert z \vert\vert_{\widetilde{E}}$}.  Now
\mbox{$\vert \vert z^{\mkern1mu\overline \phi} \vert \vert_{E\phantom{\widetilde{E}\mkern-12mu}}
= \vert \vert z \vert \vert_{E^{\,\phi}\phantom{\widetilde{E}\mkern-12mu}} \le \vert \vert z \vert \vert_{\widetilde{E}}
= \vert \vert z \vert \vert_{E\phantom{\widetilde{E}\mkern-12mu}}$}, by (ii). \vspace{.5mm}

 (iv).  There exists some vertex \mbox{$e$} of \mbox{$\null_{1-\eta}\mkern-5mu\Who(\mathbf{C})$} such that
  \mbox{$(\overline e\mkern2mu, e_\star)$}
 occurs in the reduced
\mbox{$(E^{\pm 1} \cup \{1\})$}-ex\-pres\-sion for \mbox{$ z $} or \mbox{$ \overline z $}.
Necessarily,  \mbox{$e \ne e_\star\,$}.  Hence,  \mbox{$e \not \in \null_\eta \mkern-7mu\Who(\mathbf{C})$}.
As in (ii), the element \mbox{$( a^{1-\eta}\,   e_\star\, \overline a^{\mkern3mu \eta})  \in \tilde E^{\pm 1}$} 
occurs in the reduced
\mbox{$\tilde E^{\pm 1}$}-ex\-pression for  $z$  or~\mbox{$\overline z$}.
Hence, \mbox{$( a^{1-\eta}   e_\star\, \overline a^{\mkern3mu \eta})$} or
\mbox{$( a^{\eta} \overline e_\star\, \overline a^{\mkern3mu 1-\eta}\, )$}  occurs in the reduced
\mbox{$\tilde E^{\pm 1}$}-expression for  $z$.
As in (iii), each such term is mapped to $1$ by  \mbox{$\tilde \phi$}.
Thus, \mbox{$\vert \vert z^{\mkern1mu\overline \phi} \vert \vert_{E\phantom{\tilde E}}\mkern-8mu
= \vert \vert z \vert \vert_{E^{\,\phi}\phantom{\tilde E}}\mkern-12mu < \vert \vert z \vert \vert_{\widetilde{E}}
 = \vert \vert z \vert \vert_{E\phantom{\tilde E}}\mkern-8mu$}.
\end{proof}

\begin{algorithm}\label{alg:preW}   Recall   Hypotheses~{\normalfont\ref{hyps:1}}. Whitehead's
\textit{cut-vertex subroutine} \mbox{\cite[p.\,51]{W}} has the following structure.

\noindent \textsc{Input:}   A Whitehead cut-vertex \mbox{$e_\star$} of  \mbox{$\Wh(Z \rel E_Z)$}.

 \noindent \textsc{Output:}    A \mbox{$\mathbf{C}   \in \cuts(E)$}
with   \mbox{$\Wh(Z \hskip-.8pt \rel \hskip-.8pt  E) \subseteq \Wh( \mathbf{C})$}
and   \mbox{$\vert\vert Z^{\mkern2mu\overline \phi_{\mathbf{C}}}\vert\vert_{E} <
  \vert\vert Z \vert\vert_{E}$}.

\noindent \textsc{Procedure.}  We consider two cases.

\smallskip

\noindent \textsc{Case 1:} \mbox{$\Wh(Z \rel E_Z)$} is connected.

Deleting  \mbox{$e_\star$} and its incident edges from \mbox{$\Wh(Z \rel E_Z)$}
leaves a subgraph  that    has a unique expression  as the disjoint union of
two nonempty subgraphs \mbox{$X_0$} and \mbox{$X_1$} such that  \mbox{$X_0$} is connected
and contains~\mbox{$\{1\}$}.

Set
 \mbox{$\null_0 D \coloneq (X_0 \cap E_Z^{\pm 1}) \cup \{e_\star\} \cup (E^{\pm 1}{-}E_Z^{\pm 1})$},
\hskip 4pt  \mbox{$\null_1 D \coloneq (X_1 \cap E_Z^{\pm 1}) \cup \{e_\star\}$}, \hskip 4pt and
\mbox{$\mathbf{C}  \coloneq (\,\null_0D,\null_1D, e_\star)   \in \cuts(E)$}.
Then   \mbox{$\Wh(Z \rel E) \subseteq \Wh(\,\mathbf{C})$}, and
 \mbox{$e_\star$} has positive valence in both
 \mbox{$\Wh(Z \rel E) \cap  \null_{0}\mkern-5mu\Who(\mathbf{C})$}
and \mbox{$\Wh(Z \rel E) \cap  \null_{1}\mkern-5mu\Who(\mathbf{C})$}.  Thus, \mbox{$e_\star$} has
positive valence in \mbox{$\Wh(Z \rel E) \cap  \null_{1-\eta_{\mathbf{C}}}\mkern-7mu\Who(\mathbf{C})$}.
It follows from Lemma~\ref{lem:W}(iii),(iv) that
 \mbox{$\vert\vert Z^{\mkern2mu\overline \phi_{\mathbf{C}}}\vert\vert_{E} <
  \vert\vert Z \vert\vert_{E}$}.
We return \mbox{$\mathbf{C}$}
and   terminate the procedure.

\smallskip

 \noindent \textsc{Case 2:} \mbox{$\Wh(Z \rel E_Z)$} is not connected.

Let $X$ denote the component of \mbox{$\Wh(Z \rel E_Z)$} containing~$\{1\}$,
and let \mbox{$D \coloneq X \cap E_Z^{\pm 1}$}.   If it were the case that
\mbox{$D^{-1} =  D$}, then it is not difficult to see that we would have
\mbox{$Z \subseteq \langle   D \rangle$},   \mbox{$E_Z^{\pm 1} =   D$}, and
  \mbox{$X = \Wh(Z\text{ rel }E_Z)$}, which would contradict
the assumption that \mbox{$\Wh(Z\text{ rel }E_Z)$} is not connected.
Thus, \mbox{$D^{-1} \ne  D$},  \mbox{$D\not \subseteq  D^{-1}$}, and \mbox{$D {-}  D^{-1}  \ne \emptyset$}.

Choose     \mbox{$e_\star'\hskip-2.2pt \in \hskip-2.2pt D{-}D^{-1}$}, and set 
  \mbox{$\null_0D \coloneq D \cup (E^{\pm 1}{-}E_Z^{\pm 1})$},
\mbox{$\null_1D \coloneq (E_Z^{\pm 1}{-}D)  \cup \{e_\star'\}$},  
and \mbox{$\mathbf{C}  \coloneq (\,\null_0D,\null_1D, e_\star')   \in \cuts(E)$}. 
It is clear that
\mbox{$\Wh(Z \rel E) \subseteq \Wh(\,\mathbf{C})$}.
Here,  \mbox{$\eta_{\mathbf{C}} = \vert \{ (e_\star')^{-1} \,\} \cap \null_1D  \vert = 1$}.
Also, \mbox{$\Wh(Z \rel E) \cap \,\null_0\mkern-5mu \Who(\mathbf{C}) \supseteq  X$,}  the
  component  of  \mbox{$\Wh(Z \rel E_Z)$} that contains
 \mbox{$\{e_\star', 1\}$}.  Since \mbox{$e_\star'$} has positive
valence  in~\mbox{$X$}, it follows from Lemma~\ref{lem:W}(iii),(iv)   that
 \mbox{$\vert \vert Z^{\mkern2mu\overline \phi_{\mathbf{C}}} \vert \vert_E
 < \vert \vert Z  \vert \vert_E$}.
We return \mbox{$\mathbf{C}$} and terminate the procedure.  \qed
\end{algorithm}

\begin{algorithm}\label{alg:cut}  Recall   Hypotheses~{\normalfont\ref{hyps:1}}.
Via the  mock flow chart  \vspace{3mm}

\noindent\null\hskip16pt\setlength\fboxrule{1pt}\fbox{Set
\mbox{$\Phi \coloneq 1 \in \Aut\langle\mkern1mu E\mkern2mu\vert\mkern12mu\rangle$}
and \mbox{$Z\mkern1mu' \coloneq Z$}.} \vspace{1mm}

\noindent\null\hskip40pt $\downarrow$ \vspace{1mm}

\noindent$\to$\hskip4pt\fbox{Find \mbox{$E_{Z'}\coloneq \supp(Z\mkern1mu'  \rel E)$}
and construct \mbox{$\Wh(Z\mkern1mu' \rel E_{Z'})$}.} \vspace{1mm}

\noindent\null\hskip40pt $\downarrow$  \vspace{1mm}

\noindent\null\hskip16pt\fbox{Search for a  Whitehead cut-vertex $e_\star$ of
\mbox{$\mbox{$\Wh(Z\mkern1mu'\rel E_{Z'})$}$}.} \vspace{1.3mm}

\noindent\null\hskip40pt $\downarrow$  \vspace{.7mm}

\noindent\null\hskip16pt \setlength\fboxrule{.5pt}\mbox{\fbox{Does such an \mbox{$e_\star$}  exist?} \setlength\fboxrule{1pt}
 $\xrightarrow{\textsc{No}}$    
\fbox{Return  \mbox{$(\Phi,    Z\mkern1mu')$}.} $\to$ \vspace{1mm} \fbox{Stop.}} 

\noindent\null\hskip40pt $\null_{\textstyle \downarrow\mkern-5mu\null^{\null_{\scriptstyle\textsc{Yes}}}}$ \vspace{1mm}

\noindent\null\hskip16pt\setlength\fboxrule{1pt}\fbox{Algorithm\,\ref{alg:preW}  yields a
 \mbox{$\phi  \in   \Aut \langle\mkern1mu E\mkern2mu\vert\mkern12mu\rangle$} such that
 \mbox{$\vert \vert {Z\mkern1mu'}^{\mkern2mu\overline \phi} \vert \vert_{E}   <
\vert \vert Z\mkern1mu' \vert \vert_{E}$}.}   \vspace{1mm}

\noindent\null\hskip40pt $\downarrow$ \vspace{1mm}

\noindent\hskip-1pt$\leftarrow$\hskip5pt\fbox{Reset  \mbox{$\Phi  \coloneq  \phi{\cdot}\Phi$} and
 \mbox{$Z\mkern1mu'  \coloneq  Z\mkern1mu'^{\mkern2mu\overline \phi}$}, thereby decreasing
 \mbox{$\vert \vert Z\mkern1mu' \vert \vert_{E}$}.}

\vspace{-58.5mm}

\noindent \rule[0pt]{.6pt}{5.45cm} \vspace{7.5mm}

\noindent  White\-head's \textit{cut-vertex algorithm} \mbox{\cite[p.\,51]{W}}  returns
a pair \mbox{$(\Phi,  Z\mkern1mu')$} such that
 \mbox{$\Phi   \in   \Aut \langle\mkern1mu E\mkern2mu\vert\mkern12mu\rangle$},
   \mbox{$Z\mkern1mu'  =  Z^{\,\overline \Phi}$}, and the isomorphic graphs
  \mbox{$\Wh(Z\mkern1mu' \rel \supp(Z\mkern1mu' \hskip-1.3pt  \rel \hskip-1.3pt  E))$}
and \mbox{$\Wh(Z  \rel \supp(Z  \rel E^{\,\Phi}))$}
have no Whitehead cut-vertices. It is then not difficult to find \mbox{$\supp(Z\mkern1mu'   \rel   E)$},
 \mbox{$E^{\,\Phi}$},  and, hence,  \mbox{$\supp(Z  \rel E^{\,\Phi})$}.

 Information about  these will be
given in Lemmas~\ref{lem:cvl} and~\ref{lem:sto}.  For example,
 \mbox{$\vert \supp(Z  \rel E^{\,\Phi}) \vert$} is smallest-possible over all bases of
\mbox{$\langle\mkern1mu E\mkern2mu\vert\mkern12mu\rangle$}, that is,
 \mbox{$\supp(Z  \rel E^{\,\Phi})$} is a basis of \mbox{$\Cl(Z)$}.
Also,  \mbox{$Z$}~is a sub-basis of~\mbox{$\langle\mkern1mu E\mkern2mu\vert\mkern12mu\rangle$}
if and only if \mbox{$Z {\,\cap\,} Z^{-1} \hskip-.5pt = \hskip-.5pt \emptyset$}
and \mbox{$Z\mkern1mu' \subseteq E^{\pm 1}$};
in this event,
 \mbox{$Z \cup (E^{\,\Phi}{-} Z^{\pm1})$} is a
basis of~\vspace{2mm}\mbox{$\langle\mkern1mu E\mkern2mu\vert\mkern12mu\rangle$}. \qed
\end{algorithm}

\begin{notes}\label{notes:PT}
Although Whitehead did not mention it, it is possible to implement Algorithm\,\ref{alg:cut}
in such a way that it terminates in time that is
polynomial (linear?) in  \mbox{$\vert E \vert + \vert\vert Z \vert\vert_E$}.
Depth-first searches may be used to find the  component $X$
of \mbox{$\Wh(Z\mkern1mu' \rel E_{Z'})$} that contains~\mbox{$\{1\}$},
and to search for an element \mbox{$e_\star \in X \cap E_{Z\mkern1mu'}^{\pm 1}$}
such that either  \mbox{$\overline e_\star  \not \in X$} or
 removing  \mbox{$e_\star$} and its incident edges from  \mbox{$X$} leaves a graph that is
not connected.  If no such  \mbox{$e_\star$} exists then
\mbox{$\Wh(Z\mkern1mu' \rel E_{Z'})$} has no Whitehead cut-vertices, as was seen
in Algorithm\,\ref{alg:preW}.
If such an \mbox{$e_\star$} exists, then
it may be used to construct a~$\phi$
such that \mbox{$\vert \vert Z\mkern1mu'^{\mkern2mu\overline \phi} \vert \vert_{E}  <
\vert \vert Z\mkern1mu' \vert \vert_{E}$},   as was also seen in Algorithm\,\ref{alg:preW}.
\end{notes}

 \section{Bass-Serre proofs of cut-vertex lemmas}\label{sec:White}

\begin{review}\label{rev:trees} Let $F$ be a group.

$\bullet$ Let $S$ be a subset of $F$.  We let \mbox{$\Cayley(F,S)$} denote  the graph with
 vertex-set~$F$ and edge-set \mbox{$F{\times}S$},
where each edge  \mbox{$(g,s) \in F{\times}S$} has initial vertex~\mbox{$g$} and terminal  vertex~\mbox{$gs$};
we  shall sometimes write  \mbox{$\operatorname{edge}(g \xrightarrow{\bullet (s)} gs)$} to denote the pair
 \mbox{$(g,s)$} viewed as an edge.
Then \mbox{$\Cayley(F,S)$} is an   $F$-graph.
It is a tree when \mbox{$S$} is a basis of $F$.  See, for example,  \cite[Theorem~I.7.6]{DD}.

$\bullet$ Let $I$ be a set  and
   \mbox{$(H_i)_{i\in I}$}  be a family of subgroups of $F$.
We let \mbox{$\BST(F,(H_i)_{i\in I})$} denote the  graph whose vertex-set is the disjoint union of
the set~$F$ together with the sets \mbox{$F/H_i$}, \mbox{$i \in I$}, and whose edge-set is $F{\times}\mkern2mu I$,
where each
edge  \mbox{$(g, i) \in F{\times}\mkern2mu I$} has initial vertex
\mbox{$g$} and terminal  vertex~\mbox{$gH_i$}; we  shall sometimes write
\mbox{$\operatorname{edge}(g \xrightarrow{\bullet (H_i)} gH_i)$} to denote the pair
\mbox{$(g,i)$} viewed as an edge.
 Then \mbox{$\BST(F, (H_i)_{i\in I})$} is an $F$-graph.
It is a tree when \mbox{$F = \bigast\limits_{i \in I} H_i$}, by  a
result of H.\,Bass and J.-P.\,Serre.  See, for example, \cite[Theorem~I.7.6]{DD}.
 \end{review}

Notice that if a subset \mbox{$S$}
 of~\mbox{$\langle\mkern1mu E\mkern2mu\vert\mkern12mu\rangle$}
  contains \mbox{$E$}, then    \mbox{$\Wh(S \rel E)$}
contains the basepointed star   \mbox{$\Wh(E \rel E)$}, and therefore has no Whitehead cut-vertices.
The following amazing partial converse can be extracted from
 the (1){$\Rightarrow$}(3) part of \cite[Theorem~10]{Stong}.
The case where each free-product factor is cyclic is essentially Whitehead's cut-vertex lemma \cite[Lemma]{W}.

\begin{sw}\label{thm:sto}
 For each finite set $E$ and free-product factorization \vspace{-1.5mm}
\mbox{$\langle\mkern1mu E\mkern2mu\vert\mkern12mu\rangle = \bigast\limits_{i\in I}  H_i$}
such that  \mbox{$\bigcupp\limits_{i\in I} \hskip-.5pt H_i \not \supseteq E$}, the graph
\mbox{$\Wh(\,(\,\bigcupp\limits_{i\in I}  H_i) \rel E)$} has a Whitehead cut-vertex.
\end{sw}

\begin{proof}   Set
\mbox{$ F \coloneq  \langle\mkern1mu E\mkern2mu\vert\mkern12mu\rangle
=   \bigast\limits_{i\in I}  H_i$}. Recall Review\,\ref{rev:trees},
and set \mbox{$T \coloneq \Cayley( F,  E)$} 
and  \mbox{$T^\ast \coloneq \BST( F, (H_i)_{i\in I})$}.
Thus,  \mbox{$T$} and \mbox{$T^\ast$} are
  \mbox{$F$}-trees whose vertex-sets  contain  \mbox{$ F$}.

We work first with \mbox{$T^\ast$}.  
We let \mbox{$\operatorname{link\,}_{T^\ast}(1)$} denote the set of 
\mbox{$T^\ast$}-edges incident to the \mbox{$T^\ast$}-vertex $1$,
and   \mbox{$\operatorname{star\,}_{T^\ast}(1)$} denote the set of components of the forest
\mbox{$T^\ast{-}\operatorname{link\,}_{T^\ast}(1)$}.  
For each \mbox{$T^\ast$}-vertex $v$, there exists a unique component 
 \mbox{$\chi(v) \in \operatorname{star\,}_{T^\ast}(1)$} such that
\mbox{$v \in \chi(v)$}.   For any \mbox{$T^\ast$}-vertices  $v$~and $w$, we let
 \mbox{$T^\ast[v,w]$} denote  the    $\subseteq$-smallest subtree
of \mbox{$T^\ast$} that contains $\{v,w\}$, and then
 \mbox{$\chi(v) \ne \chi(w)$} if and only if  \mbox{$1 \in  T^\ast[v,w]$} and \mbox{$v \ne w$}.
Also, $\chi$  restricts to a\vspace{-.5mm} map \mbox{$F \to \operatorname{star\,}_{T^\ast}(1)$}.  

In $T$ now,  set \mbox{$\delta \coloneq \{  \operatorname{edge}(g \xrightarrow{\bullet(e)} ge)=(g,e)  \in
 F {\times}   E  \subseteq T  \mid  \chi(g)\hskip-.8pt \ne \hskip-.8pt \chi(ge) \}$}. Clearly,
\mbox{$\chi$} is constant on the vertex-set  of each component  of  \mbox{$T  {-}\delta $}.
An element \mbox{$(g,e) \in F{\times}\mkern1mu E$} lies  in $\delta$
 if and only if
\mbox{$1 \in T^\ast[g,ge]$}, or,  equivalently, \mbox{$\overline g \in T^\ast[1,e]$}.
Since  $E$ is nonempty and  finite,   it is clear that $\delta$ is nonempty and finite.
Hence, there exists  
  \mbox{$(g_\delta,e_\delta) \in F{\times}E^{\pm 1}$} satisfying
\mbox{$\vert \vert g_\delta e_\delta \vert \vert_{E} \hskip-.3pt =\hskip-.3pt
\vert \vert g_\delta \vert \vert_{E}{+}1$}
and  \mbox{$\chi(g_\delta) \ne \chi(g_\delta e_\delta)$} such that
  \mbox{$\vert \vert g_\delta\vert \vert_{{E}}$} has the maximum possible value.

We shall now show
 that  \mbox{$g_\delta  \ne  1$}.
By hypothesis, there exists   \mbox{$e_0 \in E {-} \bigcupp_{I}  H_i$}.
Since \mbox{$e_0 \ne 1$}, there exists some \mbox{$T^\ast[1,e_0]$}-neighbour
 of~$1$, necessarily   \mbox{$1H_{i_0}$}  for some \mbox{$i_0 \in I$}.
Clearly \mbox{$e_0 \ne 1H_{i_0}$}; thus, there exists some \mbox{$T^\ast[1,e_0]$}-neigh\-bour of
\mbox{$1H_{i_0}$} other than $1$, necessarily\vspace{.5mm}  some \mbox{$h_0 \in H_{i_0}{-}\{1\}$}.
Now \mbox{$h_0 \in T^{\ast}[1,e_0]$},     \mbox{$1  \in  T^{\ast}[\overline h_0,\overline h_0e_0]$},
\mbox{$\chi(\overline h_0) \ne \chi(\overline h_0e_0)$},
and \vspace{.5mm}\mbox{$\vert\vert g_\delta \vert \vert_E \ge \min \{\vert\vert \overline h_0 \vert \vert_E,
\vert\vert \overline h_0 e_0\vert \vert_E \}$}. We know that \mbox{$e_0   \in  E{-}  \{h_0\}$}
and \mbox{$h_0 \in  H_{i_0}{-}\{1\}$}.
Hence, \mbox{$1 \not \in \{\overline h_0, \overline h_0 e_0\}$}  and
 \mbox{$g_\delta  \ne  1$}.
There exists a unique    \mbox{$e_\star \in   E^{\pm 1}$} such that
\mbox{$\vert \vert g_\delta  e_\star  \vert \vert_{ {E}} =
\vert \vert g_\delta    \vert \vert_{ {E}}{-}1 $}. Clearly,  \mbox{$e_\star \not \in \{1, e_\delta\}$}.

Let us review the graph of interest.
In $T$, define \mbox{$\operatorname{link\,}_T(1)$} and 
\mbox{$\operatorname{star\,}_T(1)$} as for \mbox{$T^\ast$}.  
 For each \mbox{$e \in E^{\pm 1} \hskip-.3pt\cup \{1\}$},
there exists a unique component \mbox{$[e] \in \operatorname{star\,}_T(1)$} such that \mbox{$e \in [e]$}.
Then the map \mbox{$E^{\pm 1} \cup \{1\} \to \operatorname{star\,}_T(1)$}, \mbox{$e \mapsto [e]$},
is bijective. 
Fix  an edge  \mbox{$(e', e'')$} of
\mbox{$\Wh( \,\bigcupp_{I}  H_i   \rel E)$}.  Here,  there exist
\mbox{$i \in I$},   \mbox{$h \in  H_i {-}\{1\}$}, and    \mbox{$g', g'' \in    F$}
such that \mbox{$h  = \overline g\,' {\cdot} \overline e\,'  {\cdot} e''  {\cdot} g''$}
with no \mbox{$E^{\pm 1}$}-cancellation,
 \mbox{$g'=1$} if \mbox{$e' = 1$}, and   \mbox{$g''= 1$} if \mbox{$e''=1$}.
Thus,  \mbox{$ e' g' h    = e'' g''$},   \mbox{$ e' g'  H_i   =     e'' g''   H_i$},  
\mbox{$e'{\cdot}g' \in [e']$}, and \mbox{$e''{\cdot}g'' \in [e'']$};
it may happen that   \mbox{$e' = 1 = g'$} and \mbox{$[e'] = \{1\}$}.
 
We now return to  \mbox{$g_\delta$} and  \mbox{$e_\star$}. We see that \mbox{$1 \in g_\delta[e_\star]$},
 \mbox{$\delta \subseteq g_\delta \operatorname{link\,}_T(1) \cup g_\delta [e_\star]$},
and   $\chi$ is constant on the vertex-set of each component of 
\mbox{$T{-}(g_\delta \operatorname{link\hskip.8pt}_T(1) \cup g_\delta[e_\star])$}.
We shall   show that if \mbox{$e_\star  \not  \in \{e', e''\}$},
 then \mbox{$ \chi(g_\delta   e' ) =    \chi(g_\delta   e'' )$}.
As \mbox{$e' \ne e_\star$}, we see   \mbox{$1 \not \in g_\delta[e']$} 
and  $\chi$ maps the vertex-set of~\mbox{$ g_\delta[e']$} 
to \mbox{$\{\chi(g_\delta e')\}$}.
As \mbox{$ g_\delta     e' g' \in g_\delta[e']$}, we see
 \mbox{$\operatorname{edge}(g_\delta   e'g'\xrightarrow{\bullet(H_i)} g_\delta   e'g'H_i)
 \not \in  \operatorname{link\,}_{T^\ast}(1)$} and    \mbox{$ \chi(g_\delta   e'g') = \chi(g_\delta   e')$}. 
It follows  that  
 \mbox{$\chi(g_\delta   e')  =  \chi(g_\delta   e'g')
= \chi(g_\delta   e'g' H_i )= \chi(g_\delta   e''g'' H_i )  
= \chi(g_\delta   e''g'')  = \chi(g_\delta e'')$}. 

Let $W$ denote the graph that is obtained  from 
 \mbox{$\Wh(  \,\bigcupp_{I}  H_i  \rel E)$}  by removing~\mbox{$e_\star$}
and its incident edges.
We have now proved that
\mbox{$\chi(g_\delta   -)$} is constant on the vertex-sets  of the components of~$W$.
Since \mbox{$1$}~and
 \mbox{$e_\delta$} are vertices of
 \mbox{$W$}
such that   \mbox{$\chi(g_\delta 1)  \ne  \chi(g_\delta  e_\delta)$},
 we see that  $W$~is not connected, and, hence,  \mbox{$e_\star$}
is a Whitehead cut-vertex   of~\mbox{$\Wh(  \,\bigcupp_{I}  H_i  \rel E)$}.
  \end{proof}

\begin{corollary}\label{cor:sto}
With  Hypotheses~{\normalfont\ref{hyps:1}}, suppose that
\mbox{$\Wh(Z \rel E)$} has no Whitehead cut-ver\-tices.  For each 
 \vspace{-1.5mm} free-product factorization
\mbox{$\langle\mkern1mu E\mkern2mu\vert\mkern12mu\rangle = \bigast\limits_{i\in I}  H_i$}
 such that
\mbox{$Z \subseteq  \bigcupp\limits_{i\in I}  H_i$}, the set 
 $E$ contains a  basis of \vspace{-1.5mm} each~\mbox{$H_i$}.
\end{corollary}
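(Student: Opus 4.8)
The plan is to deduce this corollary directly from the Stong--Whitehead theorem~\ref{thm:sto} by contradiction, after first observing that the asserted conclusion is only a restatement of a single inclusion. Write \mbox{$F\coloneq\langle\mkern1mu E\mkern2mu\vert\mkern12mu\rangle$} and \mbox{$S\coloneq\bigcupp\limits_{i\in I}H_i$}. I would record the elementary equivalence that, for a free-product factorization \mbox{$F=\bigast\limits_{i\in I}H_i$} with $E$ a basis of $F$, the set $E$ contains a basis of each $H_i$ if and only if \mbox{$E\subseteq S$}. This reduces the problem to establishing the inclusion \mbox{$E\subseteq S$}, and for the reduction I only need the direction in which \mbox{$E\subseteq S$} forces $E$ to contain a basis of each $H_i$.

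For the inclusion itself, suppose, for contradiction, that \mbox{$S\not\supseteq E$}. The Stong--Whitehead theorem then furnishes a Whitehead cut-vertex $e_\star$ of \mbox{$\Wh(S\rel E)$}. The key point is that, since \mbox{$Z\subseteq S$}, the graph \mbox{$\Wh(Z\rel E)$} is a subgraph of \mbox{$\Wh(S\rel E)$} sharing the full vertex-set \mbox{$E^{\pm1}\cup\{1\}$} and retaining only a subset of the edges. Deleting $e_\star$ and its incident edges from each therefore yields a spanning subgraph relation, and passing to a spanning subgraph can only refine the partition into connected components; hence, if \mbox{$\Wh(S\rel E)$} becomes disconnected after the deletion, so does \mbox{$\Wh(Z\rel E)$}. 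Thus $e_\star$ is a Whitehead cut-vertex of \mbox{$\Wh(Z\rel E)$}, contradicting the hypothesis. Therefore \mbox{$E\subseteq S$}.

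It then remains to justify the reduction, namely that \mbox{$E\subseteq S$} yields a basis of each $H_i$ inside $E$. Put \mbox{$E_i\coloneq E\cap H_i$}. Since distinct free-product factors meet only in $1$ and each \mbox{$e\in E$} is nontrivial, the sets $E_i$ partition $E$. Let \mbox{$\pi_i\colon F\to H_i$} be the canonical retraction of the factorization $\bigast\limits_{j\in I}H_j$ onto $H_i$; it fixes $E_i$ pointwise and sends every \mbox{$e\in E_j$} with \mbox{$j\ne i$} to $1$. As $\pi_i$ is surjective and $E$ generates $F$, the image \mbox{$\pi_i(E)=E_i\cup\{1\}$} generates $H_i$, so \mbox{$\langle E_i\rangle=H_i$}; and because \mbox{$E_i\subseteq E$} is a sub-basis of $F$, the set $E_i$ freely generates \mbox{$\langle E_i\rangle=H_i$}. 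Hence $E_i\subseteq E$ is a basis of $H_i$, as required.

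The substance of the argument resides entirely in the Stong--Whitehead theorem, which I am taking as given; the only steps that demand any care are the monotonicity observation that the existence of a cut-vertex persists under passage to a spanning subgraph, and the surjectivity of the retraction $\pi_i$. Both are routine, so I do not anticipate a genuine obstacle: the corollary is essentially a repackaging of~\ref{thm:sto} together with the translation of ``$E$ contains a basis of each $H_i$'' into the inclusion \mbox{$E\subseteq S$}.
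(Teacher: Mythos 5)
Your proof is correct and follows essentially the same route as the paper: apply the contrapositive of the Stong--Whitehead theorem, using the monotonicity of cut-vertices under passage to the spanning subgraph \mbox{$\Wh(Z \rel E)\subseteq \Wh(S\rel E)$}, to get \mbox{$E\subseteq\bigcupp_{i\in I}H_i$}, and then show each \mbox{$E_i\coloneq E\cap H_i$} is a basis of $H_i$. The only (immaterial) difference is in the last step, where you use the canonical retraction \mbox{$\pi_i\colon F\to H_i$} while the paper compares the factorizations \mbox{$\bigast_i\langle E_i\rangle\le\bigast_i H_i$}; both are routine and valid.
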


\begin{proof}
Let $i$ range over $I$.   Set \mbox{$E_i \coloneq E \cap H_i$}.
Then the \mbox{$E_i$}   are pairwise disjoint.
As it contains  \mbox{$\Wh(Z \rel E)$},
 \mbox{$\Wh( \,\bigcupp_I H_i   \rel E)$}   has no Whitehead cut-ver\-tices.
By the contrapositive of Theorem~\ref{thm:sto}, \mbox{$E \subseteq \bigcupp_{I} H_i$}.
Thus,  \mbox{$E  =  \bigcupp_{I} E_i$}.  Hence, \mbox{$\langle\mkern1mu E\mkern2mu\vert\mkern12mu\rangle = \bigast\limits_{i\in I} \langle E_i \rangle \le
\bigast\limits_{i\in I} H_i = \langle\mkern1mu E\mkern2mu\vert\mkern12mu\rangle.$}
It follows that \mbox{$\langle E_i \rangle = H_i$} and, hence,
 \mbox{$E_i$} is a basis of~\mbox{$H_i$}.
\end{proof}

\begin{cvl}\label{lem:cvl}  With
 Hypotheses~{\normalfont\ref{hyps:1}}, suppose that
\mbox{$\Wh(Z \rel E_Z)$} has no Whitehead cut-ver\-tices.
If $Z$~is a  sub-basis  of
 \mbox{$\langle\mkern1mu E\mkern2mu\vert\mkern12mu\rangle$},
then~\mbox{$Z \subseteq E^{\pm 1}$}.  Hence,
  $Z$~is a  sub-basis  of
 \mbox{$\langle\mkern1mu E\mkern2mu\vert\mkern12mu\rangle$} if and only if
 \mbox{$Z {\,\cap\,} Z^{-1} \hskip-.5pt = \hskip-.5pt \emptyset$}
and \mbox{$Z  \subseteq E^{\pm 1}$};
in this event,
 \mbox{$Z \cup (E{-} Z^{\pm1})$} is a
basis of~\vspace{2mm}\mbox{$\langle\mkern1mu E\mkern2mu\vert\mkern12mu\rangle$}.
\end{cvl}

\begin{proof}  Let $E'$ be a basis of \mbox{$\langle\mkern1mu E\mkern2mu\vert\mkern12mu\rangle$}
that contains $Z$.   A classic $E'$-length argument  due to  Nielsen shows that
\mbox{$E' \cap\mkern2mu \langle E_Z  \rangle$} is contained in some basis   \mbox{$X$}
of~\mbox{$\langle E_Z  \rangle$}; we shall mention Schreier's proof in
 Review~\ref{rev:sch}.
Now   \mbox{$\langle  E_{Z}\rangle
=  \bigast\limits_{x\in X}   \langle x \rangle$}
and \mbox{$Z \subseteq E' \cap \langle E_Z  \rangle  \subseteq
X \subseteq \bigcupp\limits_{x \in  X}
  \langle x \rangle$}. By Corollary~\ref{cor:sto},  \mbox{$E_Z$}
contains a basis of each~\mbox{$\langle x \rangle$}, necessarily $\{x\}$ or~$\{\overline x\}$.
Thus \mbox{$E_Z^{\pm 1} \supseteq     X \supseteq Z$}.
\end{proof}

We shall use the following strong form    in the next section.

\begin{corollary}\label{cor:W2B} If
  $Z$ is a  sub-basis  of~\mbox{$\langle\mkern1mu E\mkern2mu\vert\mkern12mu\rangle$}
and~\mbox{$Z \not \subseteq E^{\pm 1}$}, then there exists some
 \mbox{$\mathbf{C} \,{\in}\, \cuts(E)$} such that \mbox{$\Wh(Z \rel E) \subseteq \Wh(\,\mathbf{C} )$}
and
\mbox{$\vert \vert Z^{\mkern2mu\overline \phi_{\mathbf{C} }} \vert \vert_E  < \vert \vert Z  \vert \vert_E$}.
\end{corollary}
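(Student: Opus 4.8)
The plan is to derive this corollary by chaining together two results already in hand: Whitehead's cut-vertex lemma (Lemma~\ref{lem:cvl}) and the cut-vertex subroutine (Algorithm~\ref{alg:preW}). Notice that the conclusion sought here is, word for word, the output specification of Algorithm~\ref{alg:preW}. Hence the only genuine task is to verify that the two standing hypotheses of the corollary supply a \emph{valid input} for that subroutine, namely a Whitehead cut-vertex of \mbox{$\Wh(Z \rel E_Z)$}.

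First I would manufacture such a cut-vertex. Working with \mbox{$E_Z = \supp(Z \rel E)$} as in Hypotheses~\ref{hyps:1}, I read Lemma~\ref{lem:cvl} in the direction it is stated: if \mbox{$\Wh(Z \rel E_Z)$} has no Whitehead cut-vertex and $Z$ is a sub-basis of \mbox{$\langle\mkern1mu E\mkern2mu\vert\mkern12mu\rangle$}, then \mbox{$Z \subseteq E^{\pm 1}$}. Taking the contrapositive of this implication, the corollary's two assumptions — that $Z$ is a sub-basis and that \mbox{$Z \not\subseteq E^{\pm 1}$} — force \mbox{$\Wh(Z \rel E_Z)$} to possess a Whitehead cut-vertex \mbox{$e_\star$}. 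With \mbox{$e_\star$} in hand I would then invoke Algorithm~\ref{alg:preW} with \mbox{$e_\star$} as input. By its output guarantee it returns a triple \mbox{$\mathbf{C} \in \cuts(E)$} with \mbox{$\Wh(Z \rel E) \subseteq \Wh(\mathbf{C})$} and \mbox{$\vert \vert Z^{\mkern2mu\overline \phi_{\mathbf{C}}} \vert \vert_E < \vert \vert Z \vert \vert_E$}, which is exactly the assertion. No additional length estimates are needed, since the strict decrease has already been certified inside Algorithm~\ref{alg:preW} through Lemma~\ref{lem:W}(iii),(iv).

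I expect no real obstacle, as all the substance resides in the two cited results; the argument is essentially a two-line splice. The one point deserving care is the logical bookkeeping of the contrapositive: Lemma~\ref{lem:cvl} is phrased with the no-cut-vertex condition as a \emph{hypothesis} rather than packaged as an equivalence, so I must be sure to quote it in the form ``($Z$ a sub-basis) and (no Whitehead cut-vertex of \mbox{$\Wh(Z \rel E_Z)$}) $\Rightarrow$ \mbox{$Z \subseteq E^{\pm 1}$}'' before negating it. Once the cut-vertex \mbox{$e_\star$} is extracted in this way, the subroutine delivers \mbox{$\mathbf{C}$} and the proof is complete.
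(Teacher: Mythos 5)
Your argument is correct and is exactly the paper's proof: take the contrapositive of Whitehead's cut-vertex lemma (Lemma~\ref{lem:cvl}) to obtain a Whitehead cut-vertex of \mbox{$\Wh(Z \rel E_Z)$}, then feed it to Algorithm~\ref{alg:preW}, whose output specification is precisely the conclusion. Nothing further is needed.
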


\begin{proof}
By the contrapositive of Lemma\,\ref{lem:cvl}, \mbox{$\Wh(Z \rel E_Z)$}
has a Whitehead cut\d1ver\-tex.  The result now follows from Algorithm\,\ref{alg:preW}.
\end{proof}

It remains to discuss free-product factors.

\begin{review}\label{rev:1b} $\bullet$  We now sketch a   proof of a result of Kurosh:
 for any subgroups $H$~and $K$ of
any group~$F$,
if $K$ is a free-product factor of~$F$, say \mbox{$F =   K {\ast}L$},
then \mbox{$H \cap K$} is a free-product factor of $H$.

We shall use Bass-Serre theory, although for our purposes
the case  \mbox{$F\hskip-1.1pt = \hskip-1.1pt \langle\mkern1mu E\mkern2mu\vert\mkern12mu\rangle$} and the
graph-theoretic techniques of John\,R.\,\,Stallings~\cite{S83} would  suffice.
  
We may view  \mbox{$\BST(F, (K,L))$} as an $H$-tree,
and then the vertex $1K$ can be extended to a fundamental $H$-transversal.
The resulting graph of groups has \mbox{$H \cap K$} as one of the vertex-groups
and all the edge-groups are trivial.  By another result of Bass and Serre,
\mbox{$H \cap K$} is a free-product factor of $H$.  See,  for example, \cite[Theorem~I.4.1]{DD}.

It follows that, for any group,
  the set of all its free-product factors    is closed under finite intersections.

$\bullet$  Recall   Hypotheses~{\normalfont\ref{hyps:1}} and set
 \mbox{$F \coloneq \langle\mkern1mu E\mkern2mu\vert\mkern12mu\rangle$}.
Now $\vert E \vert$  bounds the length of any strictly descending chain of free-product factors of $F$.  Hence,
the set of all the free-product factors of $F$ is closed under arbitrary intersections.

In particular, \mbox{$\Cl(Z)$}, the intersection of all the free-product factors of $F$ containing~$Z$,
 is the $\subseteq$-smallest free-product factor of $F$ containing~$Z$.

By Kurosh's result again,
 \mbox{$\Cl(Z) \cap \langle E_Z \rangle$} is
 a free-product factor of
\mbox{$\langle E_Z \rangle$}.  However, \mbox{$\langle E_Z \rangle$} contains \mbox{$\Cl(Z)$},
since   \mbox{$\langle E_Z \rangle$} is a free-product factor of
\mbox{$F$} which contains~$Z$.  Thus,
 \mbox{$\Cl(Z)$} is a free-product factor of~\mbox{$\langle E_Z \rangle$}.
In particular, the bases of \mbox{$\Cl(Z)$} are the minimal-size
supports of $Z$ with respect to bases of~$F$.  
 \end{review}

\begin{stong} \label{lem:sto} \hskip4pt With \hskip4pt
 Hypotheses \hskip4pt {\normalfont\ref{hyps:1}}, \hskip4pt suppose \hskip4pt that\linebreak
\mbox{$\Wh(Z \rel E_Z)$} has no Whitehead cut-ver\-tices.
 Then  \mbox{$E_Z$} is a basis of \mbox{$\Cl(Z)$}, and, for each
 free-product factorization\vspace{-2.5mm}
\mbox{$\Cl(Z) = \bigast\limits_{i\in I} H_i$} such that
\mbox{$Z \subseteq   \bigcupp\limits_{i\in I} H_i$}, the set  
\mbox{$E_Z$}  contains a basis of each~\mbox{$H_i$}.
\end{stong}

\begin{proof}
We saw in Review~\ref{rev:1b} that there exists some free-product factorization
\mbox{$\langle E_Z \rangle =  \Cl(Z) {\ast} K$}, and it is clear that
\mbox{$Z \subseteq  \Cl(Z) \cup K$}.
By Corollary~\ref{cor:sto},   \mbox{$E_Z$} contains some basis \mbox{$E'$} of~\mbox{$\Cl(Z)$}.
Since \mbox{$Z \subseteq \Cl(Z) = \langle E'\rangle$}, we see that
\mbox{$\supp(Z \rel E) \subseteq  E'$}, that is,
\mbox{$E_Z  \subseteq  E'$}.  Hence,
  \mbox{$E_Z$} is a basis of \mbox{$\Cl(Z)$}.  The result   now follows from Corollary~\ref{cor:sto}.
\end{proof}

\section{A strengthened Clifford-Goldstein  algorithm}\label{sec:CG}

Clifford and Goldstein~\cite{CG0}
produced an ingenious algorithm which  returns  an element of  \mbox{$\langle Z \rangle$} that lies in a
 basis of~\mbox{$\langle\mkern1mu E\mkern2mu\vert\mkern12mu\rangle$}
or reports that no
element of  \mbox{$\langle Z \rangle$} lies in a basis
of~\mbox{$\langle\mkern1mu E\mkern2mu\vert\mkern12mu\rangle$}. They   used Whitehead's  three-manifold
techniques to
  construct a sufficiently large finite
set of finitely generated subgroups of \mbox{$\langle\mkern1mu E\mkern2mu\vert\mkern12mu\rangle$}
whose elements of sufficiently bounded $E$-length give the desired information.

In this section, we restructure their argument,    bypassing the topology and obtaining  a
 less complicated, more powerful
algorithm which
 yields as output a basis $E''$  of  \mbox{$\langle\mkern1mu E\mkern2mu\vert\mkern12mu\rangle$}
which maximizes   \mbox{$\vert E'' \cap \langle Z \rangle\vert$}.
 In particular,   \mbox{$E'' \cap \langle Z\rangle = \emptyset$}
 if and only if no element of  \mbox{$\langle Z \rangle$}
lies in a basis of \mbox{$\langle\mkern1mu E\mkern2mu\vert\mkern12mu\rangle$}.
We construct  a smaller sufficiently large finite
set of finitely generated subgroups of \mbox{$\langle\mkern1mu E\mkern2mu\vert\mkern12mu\rangle$}
whose intersections with $E$ give  the desired information.

\medskip

To fix notation, we sketch the proof of Schreier~\cite[publ.\,\,1927]{Sch}
that  subgroups of free groups are free. The
finitely generated case  had been proved by J.\,Nielsen~\mbox{\cite[publ.\,1921, in Danish]{N}}.

\begin{review}\label{rev:sch}  With  Hypotheses~{\normalfont\ref{hyps:1}},
 set \mbox{$F \coloneq \langle\mkern1mu E\mkern2mu\vert\mkern12mu\rangle$} and
 \mbox{$T \coloneq \Cayley(F,E)$};
see Review\,\ref{rev:trees}.
Let $H$ be a subgroup of $F$.
The vertices of the \textit{Schreier graph}
  \mbox{$H\backslash T$} are the cosets  \mbox{$Hg$},  \mbox{$g \in F$},
  the   basepoint is~\mbox{$H1$}, and we write
  \mbox{$\operatorname{edge}(v \xrightarrow{\bullet (e)}  ve) \coloneq (v,e) \in (H\backslash F)\,{\times}\,E$}.
The  graph \mbox{$H\backslash T$}  is  connected.
Let  \mbox{$\pi(H\backslash T, H1)$} denote the
 fundamental group of~\mbox{$H\backslash T$} at the basepoint~\mbox{$H1$}.
Each   (reduced)  \mbox{$H\backslash T$}-path   from  \mbox{$H1$}
to itself will be viewed as a (reduced)   \mbox{$E^{\pm 1}$}-ex\-pres\-sion  for some element of $H$;
for example, we would view

\centerline{\mbox{$(H1 \xrightarrow{ \bullet (e_1)} He_1
\xleftarrow{\bullet (e_2)} He_1\overline e_2   \xrightarrow{ \bullet(e_3)} He_1\overline e_2 e_3 =H1)$}}

\noindent
as the \mbox{$E^{\pm 1}$}-expression \mbox{$e_1 \overline e_2 e_3$} for an
element of  $H$.
Hence, we may identify  \mbox{$\pi(H\backslash T, H1)$} with~$H$.

Choose a maximal subtree  \mbox{$Y'$} of  \mbox{$H\backslash T$}
and let   \mbox{$Y''$} denote the complement of~\mbox{$Y'$} in  \mbox{$H\backslash T$};
then  \mbox{$Y''$} is a set of edges.  Each element $y''$ of  \mbox{$Y''$} determines
the element of  \mbox{$\pi(H\backslash T, H1)$}  that travels in
 \mbox{$Y'$} from  \mbox{$H1$} to the initial vertex of $y''$, travels along $y''$, and then
travels in  \mbox{$Y'$} from the terminal vertex of $y''$ to  \mbox{$H1$}.
By letting $y''$ range over  \mbox{$Y''$}, we get a subset $S$ of
  \mbox{$\pi(H\backslash T, H1)$}.  By collapsing  the tree \mbox{$Y'$} to a vertex, we find that
$S$ freely generates
  \mbox{$\pi(H\backslash T, H1)\,\,(=H)$}.

The vertices and edges involved in $S$ form a connected basepointed subgraph of
 \mbox{$H\backslash T$} denoted  \mbox{$\core(H \rel E)$}.   An alternative description is that
 \mbox{$\core(H \rel E)$} consists of those vertices and edges
that are involved in the reduced \mbox{$H\backslash T$}-paths     from  \mbox{$H1$}
to itself.
Thus, \mbox{$\pi(\core(H \rel E), H1) =H$} and
 \mbox{$\core(H \rel E)$} is the $\subseteq$-smallest subgraph \vspace{-.5mm} of
 \mbox{$H\backslash T$} with this property.

For each  \mbox{$h \in E \cap H$}, it is clear that \mbox{$\operatorname{edge}(H1 \xrightarrow{\bullet(h)} Hh = H1)$}
is not in the tree~\mbox{$Y'$}, and, hence,   \mbox{$h \in S$}.
Thus, \mbox{$E \cap H \subseteq S$}.   (I am indebted to Clifford and Goldstein
for this  paragraph.)
\end{review}

\begin{algorithm}\label{alg:st}    Stallings' \textit{core algorithm}~\cite[Algorithm\,5.4]{S83}
has the following structure.

 With  Hypotheses~{\normalfont\ref{hyps:1}},  we
shall suppress the information that the vertices of  \mbox{$\core(\langle Z \rangle \rel E)$}
are certain cosets, and we shall
build a basepointed $E$-labelled graph, denoted  \mbox{$\mcore(\langle Z \rangle \rel E)$}, that
has an abstract  set as vertex-set and is
 isomorphic to \mbox{$\core(\langle Z \rangle \rel E)$} as basepointed $E$-labelled graph.

For each  \mbox{$z \in Z$}, we easily build   \mbox{$\mcore(\langle z \rangle \rel E)$} as a
basepointed $E$-labelled lollipop graph, possibly trivial,
using the reduced  \mbox{$E^{\pm 1}$}-expression for~$z$.

We next amalgamate all these   lollipop graphs at their basepoints.
Throughout the construction, each edge will  be assigned an expression of the form
  \mbox{$\operatorname{edge}(v \xrightarrow{\bullet (e)} w)$} with $v$, $w$ vertices and  \mbox{$e \in E$}, but, for the moment,
 the expression need not determine the edge.  While possible,
we identify some  distinct pair of  edges  with expressions
\mbox{$\operatorname{edge}(v \xrightarrow{\bullet (e)} w)$} and
 \mbox{$\operatorname{edge}(v' \xrightarrow{\bullet (e)} w')$}
where  \mbox{$v=v'$} or  \mbox{$w=w'$} or both;
identifying the edges entails identifying $w$ with $w'$ or  $v$ with $v'$   or neither,
respectively.
 When no such pair of distinct edges is left, the
 procedure has yielded a basepointed $E$-labelled graph isomorphic to
 \mbox{$\core(\langle Z \rangle \rel E)$};
here,  expressions   \mbox{$\operatorname{edge}(v \xrightarrow{\bullet (e)} w)$}  do determine  edges. \qed
\end{algorithm}

 Stallings gave the name \textit{folding} to the foregoing edge-identifying process.
The process itself had long been used  unnamed, notably by Lyndon  in his work on
planar diagrams, where each nontrivial lollipop graph has a two-cell attached making a contractible CW-complex.

\smallskip

We now give the (strange) key construction of~\cite[Theorem~1]{CG0}.

\begin{notation}\label{not:partial}  With  Hypotheses~{\normalfont\ref{hyps:1}}, fix
 \mbox{$\mathbf{C} = (\,\null_0D, \null_1D, e_\star\,) \in \cuts(E)$}, and set
 \mbox{$F \coloneq \langle\mkern1mu E\mkern2mu\vert\mkern12mu\rangle$},
 \mbox{$\eta \coloneq\eta_{\,\mathbf{C}}$}, \mbox{$d_\star\, \coloneq e_\star^{2\eta{-}1}$},
and \mbox{$\phi \coloneq \phi_{\,\mathbf{C}}$}; see  Notation\,\ref{not:Ds}.

We first construct an $F$-map  \mbox{$\psi_{\mathbf{C}}$} from the edge-set \vspace{-.5mm} of
\mbox{$T \coloneq \Cayley(F,E)$} to the edge-set of    \mbox{$T' \coloneq \Cayley(F, E^{\,\phi})$}.
For any  \mbox{$\operatorname{edge}(g \xrightarrow{\bullet (e)}ge)  \in  F\,{\times}\,E$},
there exists a unique   \mbox{$(\alpha,\beta) \in \{0,1\}^{\times 2}$}
such that \mbox{$e  \in  \null_\alpha E_\beta$} and
\mbox{$e^\phi = d_\star^\alpha e  \overline d_\star^{\,\beta}$};
if \mbox{$e^{\pm 1} \ne e_{\star\,}^{\pm 1}$}, these two conditions are equivalent, while
if \mbox{$e^{\pm 1} = e_{\star\,}^{\pm 1}$}, the two conditions together say that  \mbox{$\alpha = \beta = \eta$}.
We set  \mbox{$(\operatorname{edge}(g \xrightarrow{\bullet (e)}  ge))^{\psi_{\mathbf{C}}}
\coloneq  \operatorname{edge}(g \overline d_\star^{\mkern3mu\alpha} \xrightarrow{\bullet(e^\phi)} ge \overline d_\star^{\mkern3mu\beta})$};
we emphasize that  no action of  \mbox{$\psi_{\mathbf{C}}$}  on vertices is being defined.
It is clear that  \mbox{$\psi_{\mathbf{C}}$} is an $F$-map.

Let   \mbox{$H$} be a finitely generated subgroup of $F$.
Then  \mbox{$\psi_{\mathbf{C}}$} induces a set map  from the edge-set of
 \mbox{$ H \backslash T$} to the edge-set
of  \mbox{$H \backslash T'$}, and the image of the edge-set of
\mbox{$\core( H  \rel E)$} under this induced map is
 then the edge-set of a unique subgraph  $X$  of
 \mbox{$H \backslash T'$} with the full vertex-set,  \mbox{$H \backslash F$}.
  Let \mbox{$K \coloneq \pi(X, H1) \le \pi(H \backslash T', H1)$}.
We may identify the latter group with $H$,  where \mbox{$(H \backslash T')$}-paths are
 \mbox{$(E^{\,\phi})^{\pm 1}$}-ex\-pressions.
We set \mbox{$\partial_{\,\mathbf{C}\mkern-1mu} H \coloneq K^{\overline \phi} \le H^{\overline \phi}$}.
Recall that \mbox{$\mcore(H \rel E)$}
was constructed in Algorithm\,\ref{alg:st};
we shall be viewing \mbox{$\partial_{\,\mathbf{C}\mkern-1mu}$} as a graph
operation that converts   \mbox{$\mcore(H \hskip-2pt \rel\hskip-2pt E)$} into
 \mbox{$\mcore(\partial_{\,\mathbf{C}\mkern-1mu} H  \hskip-2pt\rel\hskip-2pt E)$}.
\end{notation}

\begin{lemma}\label{lem:CG} With the foregoing notation,  the following hold for
 \mbox{$\partial_{\,\mathbf{C}} H \le H^{\overline \phi_{\mathbf{C}}}$}.\vspace{-3mm}
\begin{enumerate}[{\rm (i)}]
\setlength\itemsep{-5pt}
\item
 \mbox{$\mcore( \partial_{\,\mathbf{C} } H \rel E)$} may be  constructed algorithmically.
\item    \mbox{$\core(H \rel E)$} has at least as many edges as
 \mbox{$\core(\partial_{\,\mathbf{C}\mkern-1mu} H \rel E)$}.
\item For each     \mbox{$z \in H$}, if   \mbox{$\Wh(\{z\} \rel E) \subseteq \Wh(\,\mathbf{C})$}, then
 \mbox{$z^{\,\overline \phi_{\mathbf{C}'}} \in \partial_{\,\mathbf{C} } H$}.
\item  If $Y$ is any  sub-basis  of
 \mbox{$\langle\mkern1mu E\mkern2mu\vert\mkern12mu\rangle$}  such that  \mbox{$Y  \subseteq H$} and  \mbox{$Y \not \subseteq E^{\pm 1}$}, then
there exists some  \mbox{$\mathbf{C}' \in \cuts(E)$} such that
\mbox{$Y^{\,\overline \phi_{\mathbf{C}'}}\subseteq \partial_{\,\mathbf{C}'\mkern-1mu} H$} and
\mbox{$\vert \vert Y^{\,\overline \phi_{\mathbf{C}'}}\vert \vert_E < \vert \vert Y \vert \vert_E$}.
\end{enumerate}  \vspace{-2mm}
\end{lemma}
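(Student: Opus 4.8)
The plan is to isolate two structural facts about $\psi_{\mathbf{C}}$ that drive (i) and (ii), then to prove (iii) by tracking a loop through the Schreier graphs, and finally to obtain (iv) by feeding Corollary~\ref{cor:W2B} into (iii).

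First I would record that $\psi_{\mathbf{C}}$ is an $F$-equivariant \emph{bijection} from the edge-set of $T$ onto the edge-set of $T'$: its inverse recovers $e\in E$ from $e^{\,\phi}\in E^{\,\phi}$, then the pair $(\alpha,\beta)$ from $e$ (with $\alpha=\beta=\eta$ when $e\in e_\star^{\pm1}$), and then $g$ from $g\overline d_\star^{\,\alpha}$. Hence the induced map on the edge-sets of the Schreier quotients $H\backslash T\to H\backslash T'$ is a bijection, and it restricts to a bijection from the edges of $\core(H\rel E)$ onto the edges of $X$; in particular the two graphs have equally many edges. Since $H\backslash T'$ is a Schreier graph it is folded, hence so is its subgraph $X$; thus the based core of the $H1$-component of $X$ is already reduced, and it is the isomorphic image, under the covering $K\backslash T'\to H\backslash T'$, of $\core(K\rel E^{\,\phi})$. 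Relabelling $E^{\,\phi}$ to $E$ by $\overline\phi_{\mathbf{C}}$ turns $\core(K\rel E^{\,\phi})$ into $\core(\partial_{\,\mathbf{C}}H\rel E)$, so the latter has at most as many edges as $X$, i.e.\ as $\core(H\rel E)$; this is (ii). For (i) I would note that the whole recipe --- apply the explicit relabel-and-$\overline d_\star$-shift rule $\psi_{\mathbf{C}}$ to the finitely many edges of the already-computed $\mcore(H\rel E)$, discard the valence-one tails based away from $H1$, fold, and relabel by $\overline\phi_{\mathbf{C}}$ --- is a finite, effective graph operation, which is exactly what is asserted.

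For (iii) I would write the reduced $E^{\pm1}$-expression $z=e_1e_2\cdots e_n$ and invoke Lemma~\ref{lem:W}(ii): the hypothesis $\Wh(\{z\}\rel E)\subseteq\Wh(\mathbf{C})$ produces a map $i\mapsto\chi_i$ with $\chi_0=\chi_n=0$, with $e_i\in\null_{\chi_{i-1}}D_{\chi_i}$ for all $i$, and hence with $\chi_{i-1}=\chi(e_i)$ and $\chi_i=\chi(\overline e_i)$ whenever $e_i\notin e_\star^{\pm1}$. Applying the retraction $\tilde\phi$ of Lemma~\ref{lem:W}(iii) to the length-$n$ expression $\prod_i(a^{\chi_{i-1}}e_i\overline a^{\chi_i})$ collapses the $e_\star^{\pm1}$-letters to $1$ and rewrites the remaining letters as an $E^{\,\phi}$-expression $w$ with $w=z$ in $F$. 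The loop $\gamma'$ that $w$ traces at $H1$ in $H\backslash T'$ visits the vertices $Hg_i\overline d_\star^{\,\chi_i}$, where $g_i\coloneq He_1\cdots e_i$, and each of its edges is precisely the $\psi_{\mathbf{C}}$-image of the corresponding edge of the reduced loop of $z$ in $\core(H\rel E)$; therefore $\gamma'$ lies in $X$. Consequently $z\in\pi(X,H1)=K$ and $z^{\,\overline\phi_{\mathbf{C}}}\in K^{\overline\phi_{\mathbf{C}}}=\partial_{\,\mathbf{C}}H$.

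Finally (iv) should drop out by combining the two earlier facts: since $Y$ is a sub-basis with $Y\not\subseteq E^{\pm1}$, Corollary~\ref{cor:W2B} supplies a $\mathbf{C}'\in\cuts(E)$ with $\Wh(Y\rel E)\subseteq\Wh(\mathbf{C}')$ and $\vert\vert Y^{\,\overline\phi_{\mathbf{C}'}}\vert\vert_E<\vert\vert Y\vert\vert_E$; as $\Wh(\{y\}\rel E)\subseteq\Wh(Y\rel E)\subseteq\Wh(\mathbf{C}')$ and $y\in Y\subseteq H$ for each $y\in Y$, part (iii) gives $y^{\,\overline\phi_{\mathbf{C}'}}\in\partial_{\,\mathbf{C}'}H$, so $Y^{\,\overline\phi_{\mathbf{C}'}}\subseteq\partial_{\,\mathbf{C}'}H$. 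I expect the genuine difficulty to be concentrated in (iii): verifying that the $E^{\,\phi}$-loop $\gamma'$ really is the $\psi_{\mathbf{C}}$-image of the $E$-loop of $z$ requires careful bookkeeping at the cut-vertex letters $e_\star^{\pm1}$ --- where $\tilde\phi$ annihilates the letter while the running vertex is shifted by the appropriate power of $\overline d_\star$ --- and it is exactly the matching $\chi_{i-1}=\chi(e_i)$, $\chi_i=\chi(\overline e_i)$ forced by the cut hypothesis that keeps $\gamma'$ inside $X$.
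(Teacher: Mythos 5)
Your argument is correct and follows essentially the same route as the paper's: edge-wise transport by $\psi_{\mathbf{C}}$ with tail-trimming for (i) and (ii), the $\chi_i$-bookkeeping along the loop of $z$ (with the $e_\star^{\pm 1}$-letters collapsing to vertex equalities rather than edges) for (iii), and Corollary~\ref{cor:W2B} fed into (iii) for (iv). The only (harmless) deviations are that your recipe for (i) should also explicitly discard the components of the transformed graph not containing the basepoint, and that the folding step you mention is unnecessary, since the image graph is a subgraph of the already-folded Schreier graph $H\backslash T'$ --- a point you yourself use to justify (ii).
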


\begin{proof}  (i).   Since \mbox{$K^{\overline \phi}  =  \partial_{\,\mathbf{C}\mkern-1mu} H $},
there is a natural graph isomorphism that maps   \mbox{$\core(K \rel E^{\,\phi})$}
to \mbox{$\core( \partial_{\,\mathbf{C}\mkern-1mu} H \rel E)$}, changing each
\mbox{$Kg \xrightarrow{\bullet(e^\phi)} Kg(e^\phi)$} to
\mbox{$K^{\overline \phi} g^{\overline \phi} \xrightarrow{\bullet(e)} K^{\overline \phi}g^{\overline \phi}e$}.
Hence, there is a natural graph isomorphism that maps   \mbox{$\mcore(K \rel E^{\,\phi})$}
to \mbox{$\mcore( \partial_{\,\mathbf{C}\mkern-1mu} H \rel E)$}, changing each
\mbox{$v \xrightarrow{\bullet(e^\phi)} w$} to
\mbox{$v \xrightarrow{\bullet(e)}   w$}; the labels on the non-basepoint vertices are irrelevant.
Thus, it suffices to  algorithmically construct  \mbox{$\mcore(K \rel E^{\,\phi})$} from  \mbox{$\mcore(H \rel E)$}.

If \mbox{$d_\star \in E$}, resp. \mbox{$\overline d_\star \in E$},
we say that a vertex\vspace{-1mm} $v$ of \mbox{$\mcore(H \rel E)$} \textit{has a neighbour 
\mbox{$v \overline d_\star$}} if an  edge of the form 
  \mbox{$\operatorname{edge}(w \xrightarrow{\bullet(d_\star)} v$)},
resp. \mbox{$\operatorname{edge}(v \xrightarrow{\bullet(\overline d_\star)} w$)},
lies in \mbox{$\mcore(H \rel E)$}; in this event, we say that \mbox{$w$} is
\mbox{$v \overline d_\star$}.
We simultaneously add to \mbox{$\mcore(H \rel E)$},
for every vertex $v$ that does not have a neighbour~\mbox{$v \overline d_\star$},
  a valence\d1zero vertex with  label  \mbox{$v \overline d_\star$}.

Next, in \mbox{$\mcore(H \rel E)$} adorned with the valence-zero vertices,
we   simultaneously  replace each    \mbox{$\operatorname{edge}(v \xrightarrow{\bullet(e)} w$)}
  with
 \mbox{$\operatorname{edge}(v\overline d_\star^{\,\alpha} \xrightarrow{\bullet(e^\phi)}
w\overline d_\star^{\,\beta})$} \vspace{.5mm}
for the unique  \mbox{$(\alpha,\beta) \in \{0,1\}^{\times 2}$}
such that \mbox{$e  \in  \null_\alpha  E_\beta$} and
\mbox{$e^\phi = d_\star^\alpha e \overline  d_\star^{\,\beta}$}.
This particular operation alters incidence maps and edge labellings,
but not the vertex-set or the edge-set.

 In the resulting finite  graph, we then keep only the component
that has the basepoint.  We next
 successively delete non-basepoint, valence-one vertices and their (unique) incident edges, while possible.  When this is
no longer possible, we have  constructed   \mbox{$\mcore(K \rel E^{\,\phi})$}   algorithmically.

 (ii).  It is clear from the constructions that  \mbox{$\core(H \rel E)$} has at least as many
 edges as  \mbox{$\core(K \rel E^{\,\phi})$}, which in turn has the same number of
edges as  \mbox{$\core(\partial_{\,\mathbf{C}\mkern-1mu} H \rel E)$}.

\hskip-4pt(iii).  \hskip-4pt Consider any expression
\mbox{$ Hg \xrightarrow{\bullet (e)} Hge$}\vspace{1mm}
corresponding to an edge or inverse edge in \mbox{$\core(H \rel E)$},
  and consider any \mbox{$(\alpha,\beta) \in \{0,1\}^{\times 2}$} such that
\mbox{$e \in   \null_\alpha D_\beta$}.\vspace{1mm}  Then
\mbox{$d_\star^{\,\alpha} e \overline d_\star^{\,\beta} \in \{e^\phi, 1\}$},\vspace{.5mm} for,
if \mbox{$d_\star^{\,\alpha} e \overline d_\star^{\,\beta} \ne e^\phi$},
then either \mbox{$e = e_\star\,$}, \mbox{$\alpha = 1{-}\eta$}, \mbox{$\beta = \eta$},
\mbox{$ d_\star^{\,\alpha} e \overline d_\star^{\,\beta} = d_\star^{1-\eta-\eta} e_\star\, = 1$},
or \mbox{$e = \overline e_\star\,$}, \mbox{$\alpha = \eta$}, \mbox{$\beta = 1{-}\eta$},
\mbox{$d_\star^{\,\alpha} e   \overline  d_\star^{\,\beta} = d_\star^{\eta-1+\eta} \overline e_\star\, = 1$}.
This means that the expression \mbox{$Hg \overline d_\star^{\,\alpha}
\xrightarrow{\bullet(d_\star^\alpha e \overline d_\star^{\,\beta})} Hge \overline d_\star^{\,\beta}$}
corresponds to an edge, inverse edge, or   equality  in the graph~$X$ of Notation\,\ref{not:partial}.

Suppose that  \mbox{$z \in H$} and let  \mbox{$e_1 e_2 \cdots e_n$} represent the
reduced  \mbox{$E^{\pm 1}$}-expression for $z$.
We then have a corresponding reduced \mbox{$H \backslash T$}-path   from   \mbox{$H1$}
 to itself, which we may write  in  \mbox{$H \backslash \Cayley(F, E^{\pm 1})$} as

\centerline {\mbox{$H1 \xrightarrow{\bullet(e_1)} He_1
\xrightarrow{\bullet(e_2)}  He_1e_2 \xrightarrow{\bullet(e_3)} \cdots
\xrightarrow{\bullet(e_n)}  He_1e_2 \cdots e_n = Hz = H1.$}}

\noindent The  \mbox{$H \backslash T$}-path  must then stay within the subgraph  \mbox{$\core(H \rel E)$}.

Suppose further that \mbox{$\Wh(\{z\} \rel E) \subseteq \Wh(\mathbf{C})$}.  This means that
there exists a (unique) set map \mbox{$\{0, 1, \ldots, n\} \to \{0,1\}$}, \mbox{$i \mapsto \chi_i$},
such that \mbox{$e_i \in \null_{\chi_{i-1}} D_{\chi_i}$},   \mbox{$i  = 1,2,\ldots, n$}, 
and \mbox{$\chi_0 = \chi_{n} = 0$}.   In our \mbox{$\core(H \rel E)$}-path,
let us change each vertex \mbox{$ H e_1 \cdots  e_{i}$} to \mbox{$H e_1 \cdots  e_{i } \overline d_\star^{\,\chi_{i}}$}
and  each step
\mbox{$H e_1 \cdots e_{i-1} \xrightarrow{\bullet(e_{i})} H e_1 \cdots e_{i-1}e_{i}$} to
\mbox{$H e_1 \cdots e_{i-1} \overline d_\star^{\,\chi_{i-1}}
\xrightarrow{\bullet( d_\star^{\,\chi_{i-1}} e_{i}\overline d_\star^{\,\chi_{i}}) }
H e_1 \cdots e_{i-1} e_{i} \overline d_\star^{\,\chi_{i}},$}
which we have seen corresponds to an edge, inverse edge, or  equality in $X$.
We thus obtain an $X$-path    from    \mbox{$H1$} to itself that reads
an  \mbox{$((E^{\,\phi})^{\pm 1}\cup\{1\})$}-expression for $z$.
This shows that \mbox{$z \in \pi(X, H1) = K$}, as desired.

(iv).   By Corollary\,\ref{cor:W2B},  there exists
 \mbox{$\mathbf{C}'   \in \cuts(E)$}
such that \mbox{$\vert \vert Y^{\mkern2mu\overline \phi_{\mathbf{C}'}}
\vert \vert_{E} \hskip-.9pt<\hskip-.9pt \vert \vert Y  \vert \vert_{E}$} and
\mbox{$\Wh(Y \rel E)\subseteq  \Wh(\,\mathbf{C}')$}.
By (iii), \mbox{$Y^{\mkern2mu\overline \phi_{\mathbf{C}'}}\subseteq \partial_{\,\mathbf{C}'\mkern-1mu} H$}.
\end{proof}

We now give a construction that is a somewhat less complicated variant of
 the algorithm of Clifford and Goldstein~\cite{CG0}.

\begin{notation}  \label{not:CG} With  Hypotheses~{\normalfont\ref{hyps:1}}, let
  \mbox{$\mathcal{F}$} denote the set of all finitely generated subgroups of
 \mbox{$\langle\mkern1mu E\mkern2mu\vert\mkern12mu\rangle$}.  Let
 \mbox{$\Gamma$}~denote the graph whose vertex-set is
 \mbox{$\mathcal{F}$} and whose edge-set is  \mbox{$\mathcal{F}\,{\times}\, \cuts(E)$}
where  each edge  \mbox{$(H, \mathbf{C}) \in \mathcal{F}\,{\times}\, \cuts(E)$} has initial vertex
$H$ and terminal vertex  \mbox{$\partial_{\,\mathbf{C}\mkern-1mu} H$}; see Notation~$\ref{not:partial}$.

Set  \mbox{$G \coloneq \langle Z \rangle \in \mathcal{F}$}.
 Let  \mbox{$(G{\blacktriangleleft})$}  denote
the subgraph of  \mbox{$\Gamma$}
that radiates out from~$G$, that is,   \mbox{$(G{\blacktriangleleft})$} is the smallest subgraph of
 \mbox{$\Gamma$} that has~$G$ as a vertex and is closed in  \mbox{$\Gamma$} under
the operation of adding to each vertex $H$ each outgoing edge
 \mbox{$(H, \mathbf{C})$} and its terminal vertex  \mbox{$\partial_{\,\mathbf{C}} H$}.

For each   \mbox{$n \ge 0$}, each element  \mbox{$(\mathbf{C}_i)_{i=1}^n$} of  $(\cuts(E))^{\times n}$
determines the oriented  \mbox{$(G{\blacktriangleleft})$}-path with  the
edge-sequence  \mbox{$(H_{i}\xrightarrow{(H_{i}, \mathbf{C}_i)}H_{i+1})_{i=1}^n$}
where  \mbox{$H_1 = G$} and  \mbox{$H_{i+1} = \partial_{\mathbf{C}_i} H_{i}$} for
 \mbox{$i = 1,\ldots, n$}.
To simplify notation, we shall say that  \mbox{$(\mathbf{C}_i)_{i=1}^n$} itself
  is an oriented \mbox{$(G{\blacktriangleleft})$}-path  with initial vertex $G$.

We usually think of a vertex $H$ of   \mbox{$(G{\blacktriangleleft})$} 
as the graph  \mbox{$\mcore(H \rel E)$}, for ease of recognition.
  We shall see that we are interested in finding a vertex 
that maximizes the number of  loops at the basepoint.
\end{notation}

\begin{theorem}\label{thm:CG} With the foregoing notation, the following hold.
\vspace{-2mm}
\begin{enumerate}[{\rm (i)}]
\setlength\itemsep{-5pt}
\item  \mbox{$(G{\blacktriangleleft})$} is an algorithmically constructible
 finite graph whose vertices are viewed as
finite, $E$-labelled, basepointed graphs.

\item For each vertex $H$ of    \mbox{$(G{\blacktriangleleft})$}, there is an
algorithmically constructible  oriented \mbox{$(G{\blacktriangleleft})$}-path   \mbox{$(\mathbf{C}_i)_{i=1}^n$}
  from $G$ to $H$, \mbox{$H = \partial_{\mathbf{C}_n} \cdots 
 \partial_{\mathbf{C}_1} G \le G^{\,\overline \phi_{\mathbf{C}_1} 
 \cdots  \overline  \phi_{\mathbf{C}_n}}$}, and
\mbox{$(E \cap H)^{\phi_{\mathbf{C}_n} \cdots \phi_{\mathbf{C}_1}} \subseteq  E'' \cap\, G$},
where  \mbox{$E'' \coloneq E^{\mkern2mu\phi_{\mathbf{C}_n} \cdots 
\phi_{\mathbf{C}_1}}$}.

\item  For each basis $E''$   of
  \mbox{$\langle\mkern1mu E\mkern2mu\vert\mkern12mu\rangle$}, there exists
 some vertex $H$ of  \mbox{$(G{\blacktriangleleft})$} such that
 \mbox{$\vert E \cap H \vert \ge \vert E'' \cap \,G \vert$}.
\end{enumerate}  \vspace{-2mm}
\end{theorem}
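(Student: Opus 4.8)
The plan is to establish the three parts in turn, with essentially all the technical work already packaged in Lemma~\ref{lem:CG}, which controls the edge operation $\partial_{\,\mathbf{C}}$. For part (i), I would note first that $G=\langle Z\rangle$ is finitely generated, so $\mcore(G\rel E)$ is produced by Algorithm~\ref{alg:st}, and that $\cuts(E)$ is a finite set depending only on $E$. The outgoing edges at an already-built vertex $H$ are the finitely many pairs $(H,\mathbf{C})$, $\mathbf{C}\in\cuts(E)$, and each terminal vertex $\mcore(\partial_{\,\mathbf{C}}H\rel E)$ is constructible by Lemma~\ref{lem:CG}(i). A breadth-first search out of $G$, in which each freshly built core graph is compared, up to isomorphism of finite $E$-labelled basepointed graphs, against those already recorded, therefore enumerates $(G{\blacktriangleleft})$ provided it halts. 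Halting is the one substantive point: by Lemma~\ref{lem:CG}(ii) traversing an edge never increases the number of edges of the associated core, so $\core(H\rel E)$ has at most as many edges as $\core(G\rel E)$ for every vertex $H$ of $(G{\blacktriangleleft})$. Since finite $E$-labelled basepointed graphs with a bounded number of edges fall into finitely many isomorphism classes, and distinct finitely generated subgroups have non-isomorphic cores (Review~\ref{rev:sch}), the vertex-set of $(G{\blacktriangleleft})$ is finite and the search terminates.

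For part (ii), having $(G{\blacktriangleleft})$ as an explicit finite connected graph, I would retain a parent edge at each vertex during the search of (i), which yields an oriented path $(\mathbf{C}_i)_{i=1}^n$ from $G$ to any prescribed vertex $H$. Writing $H_1\coloneq G$ and $H_{i+1}\coloneq\partial_{\mathbf{C}_i}H_i$, so that $H=\partial_{\mathbf{C}_n}\cdots\partial_{\mathbf{C}_1}G$, the inclusion $\partial_{\,\mathbf{C}}H\le H^{\overline\phi_{\mathbf{C}}}$ from the statement of Lemma~\ref{lem:CG} gives, by an immediate induction, $H=H_{n+1}\le G^{\,\overline\phi_{\mathbf{C}_1}\cdots\overline\phi_{\mathbf{C}_n}}$. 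Setting $\Psi\coloneq\phi_{\mathbf{C}_n}\cdots\phi_{\mathbf{C}_1}$, so that $\Psi^{-1}=\overline\phi_{\mathbf{C}_1}\cdots\overline\phi_{\mathbf{C}_n}$ and $E''=E^{\Psi}$, this reads $H^{\Psi}\le G$. Hence for $e\in E\cap H$ one has $e^{\Psi}\in E^{\Psi}=E''$ and $e^{\Psi}\in H^{\Psi}\le G$, which is exactly the claimed inclusion $(E\cap H)^{\Psi}\subseteq E''\cap G$.

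Part (iii) is the heart of the matter, and the mechanism is a length-reducing descent powered by Lemma~\ref{lem:CG}(iv). I would start from $Y_0\coloneq E''\cap G$, which is a sub-basis of $\langle\mkern1mu E\mkern2mu\vert\mkern12mu\rangle$ (being a subset of the basis $E''$) contained in the vertex $H_1\coloneq G$. As long as $Y_{i-1}\not\subseteq E^{\pm1}$, Lemma~\ref{lem:CG}(iv) applied to $Y_{i-1}\subseteq H_i$ furnishes $\mathbf{C}_i\in\cuts(E)$ with $Y_i\coloneq Y_{i-1}^{\,\overline\phi_{\mathbf{C}_i}}\subseteq\partial_{\mathbf{C}_i}H_i\eqcolon H_{i+1}$ and $\vert\vert Y_i\vert\vert_E<\vert\vert Y_{i-1}\vert\vert_E$, where each $Y_i$ is again a sub-basis because $\overline\phi_{\mathbf{C}_i}$ is an automorphism. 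As $\vert\vert Y_i\vert\vert_E$ is a strictly decreasing sequence of non-negative integers, the descent halts at some $Y_m\subseteq E^{\pm1}$ with $Y_m\subseteq H\coloneq H_{m+1}$, a vertex of $(G{\blacktriangleleft})$, and $\vert Y_m\vert=\vert Y_0\vert=\vert E''\cap G\vert$ since every $\overline\phi_{\mathbf{C}_i}$ is injective. Finally I would convert $Y_m\subseteq E^{\pm1}\cap H$ into the desired count: sending each $y\in Y_m$ to whichever of $y,y^{-1}$ lies in $E$ produces an element of $E\cap H$ (as $H$ is a subgroup), and this assignment is injective because $Y_m\cap Y_m^{-1}=\emptyset$ for a sub-basis. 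Thus $\vert E\cap H\vert\ge\vert Y_m\vert=\vert E''\cap G\vert$, completing (iii). The main obstacles I anticipate are precisely the termination in (i), which is the linchpin of the whole algorithm and rests solely on the monotonicity of Lemma~\ref{lem:CG}(ii), and the careful left-to-right bookkeeping of the composed automorphisms $\phi_{\mathbf{C}_i}$ in (ii) and (iii).
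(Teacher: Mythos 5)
Your proposal is correct and follows essentially the same route as the paper: finiteness of $(G{\blacktriangleleft})$ via the edge-count monotonicity of Lemma~\ref{lem:CG}(ii), path bookkeeping from $\partial_{\,\mathbf{C}}H\le H^{\overline\phi_{\mathbf{C}}}$ for (ii), and the length-reducing descent powered by Lemma~\ref{lem:CG}(iv) for (iii). The paper merely compresses (ii) to ``clear'' and (iii) to a single sentence; your write-up supplies the same steps in detail, including the harmless final passage from $E^{\pm1}\cap H$ to $E\cap H$.
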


\begin{proof}  (i).  For each \mbox{$H \in \mathcal{F}$}, 
if $n$ denotes the number of edges in \mbox{$\core (H \rel E)$}, it is clear from Review~\ref{rev:sch} that
$H$ can be generated by $n$-or-less elements of  
  \mbox{$\langle\mkern1mu E\mkern2mu\vert\mkern12mu\rangle$} of $E$-length $2n$-or-less.
By Lemma\,\ref{lem:CG}(ii),  \mbox{$(G{\blacktriangleleft})$} is finite.
By Lemma\,\ref{lem:CG}(i), we may
use a depth-first search to construct a maximal subtree of  \mbox{$(G{\blacktriangleleft})$}.
We then add  the missing edges of  \mbox{$(G{\blacktriangleleft})$},
although this is optional for our purposes.

(ii) is clear.

(iii). It follows from Lemma\,\ref{lem:CG}(iv) that  there exists
some   \mbox{$(\mathbf{C}_i)_{i=1}^n$}  such that
 \mbox{$(E'' \cap\, G)^{\,\overline \phi_{\mathbf{C}_1}\overline \phi_{\mathbf{C}_2}
 \cdots  \overline  \phi_{\mathbf{C}_n}} \subseteq   E^{\pm 1} \cap \,\partial_{\mathbf{C}_n}
 \cdots \partial_{\mathbf{C}_2}
 \partial_{\mathbf{C}_1} G $}.
\end{proof}

We now construct a basis \mbox{$E''$}  of  \mbox{$\langle\mkern1mu E\mkern2mu\vert\mkern12mu\rangle$}
which maximizes   \mbox{$\vert E'' \cap \langle Z \rangle\vert$}.

\begin{algorithm}\label{alg:CG}  Recall   Hypotheses~{\normalfont\ref{hyps:1}}.

$\bullet$ Set \mbox{$G \coloneq \langle Z \rangle$} and construct \mbox{$\mcore(G \rel E)$}; see Algorithm\,\ref{alg:st}.

$\bullet$ Construct~\mbox{$(G{\blacktriangleleft})$} from  \mbox{$\mcore(G \rel E)$}; see Theorem\,\ref{thm:CG}(i).

$\bullet$ In \mbox{$(G{\blacktriangleleft})$}, find a vertex $H$    maximizing
  the number of loops at the basepoint of \mbox{$\mcore(H \rel E)$}, that is,
maximizing \mbox{$\vert E \cap H\vert$}.

$\bullet$ Find an  oriented \mbox{$(G{\blacktriangleleft})$}-path \mbox{$(\mathbf{C}_i)_{i=1}^n$}   from $G$ to~$H$;
 see Theorem\,\ref{thm:CG}(ii).

$\bullet$ Return \mbox{$E'' \coloneq E^{\mkern2mu\phi_{\mathbf{C}_n} \cdots \phi_{\mathbf{C}_2}  \phi_{\mathbf{C}_1}}$},
 a basis  of  \mbox{$\langle\mkern1mu E\mkern2mu\vert\mkern12mu\rangle$}
which maximizes  \mbox{$\vert E'' \cap \langle Z \rangle\vert$} by Theorem\,\ref{thm:CG}(ii),(iii). \qed
\end{algorithm}

\bibliographystyle{plain}

\noindent
\textsc{Departament de  Matem\`atiques,  \\ Universitat Aut\`onoma de Barcelona, \\
08193 Bellaterra (Barcelona), Spain} \\  
\noindent \emph{email}{:\;\;}\url{dicks@mat.uab.cat}  \qquad \emph{URL}{:\;\;}\url{http://mat.uab.cat/~dicks/}
\end{document}